\newtheorem{theorem}{Theorem}
\newtheorem{conjecture}[theorem]{Conjecture}
\newtheorem{lemma}{Lemma}[theorem]
\newtheorem{proposition}{Proposition}[theorem]
\newcommand{\Rmnum}[1]{\expandafter\@slowromancap\romannumeral #1@}
   \newcommand{\Hom}{\operatorname{Hom}}
 \newcommand{\Pic}{\operatorname{Pic}}
 \newcommand{\PrePer}{\operatorname{PrePer}}
 \newcommand{\Disc}{\operatorname{Disc}}
 \newcommand{\Tor}{\operatorname{Tor}}
 \newcommand{\Norm}{\operatorname{Norm}}
  \newcommand{\Div}{\operatorname{Div}}
 \newcommand{\Jac}{\operatorname{Jac}}
 \newcommand{\Aut}{\operatorname{Aut}}
\newcommand{\Gal}{\operatorname{Gal}}
\newcommand{\rank}{\operatorname{rank}}
\titleformat{\section}[block]{\color{black}\large\filcenter}{}{1em}{}
\titleformat{\subsection}[hang]{\bfseries}{}{1em}{}
\theoremstyle{remark}
\newtheorem{remark}{Remark}[theorem]
\begin{document}
\title{Rational Points on certain families of symmetric equations}
\author{Wade Hindes}
\date{\today}
\renewcommand{\thefootnote}{}
\footnote{2010 \emph{Mathematics Subject Classification}: Primary 14G05; Secondary 37P55.}
\footnote{\emph{Key words and phrases}: Rational Points on Curves, Arithmetic Dynamics.}
\maketitle
\begin{abstract} We generalize the work of Dem'janenko and Silverman for the Fermat quartics, effectively determining the rational points on the curves $x^{2m}+ax^m+ay^m+y^{2m}=b$ whenever the ranks of some companion hyperelliptic Jacobians are at most one. As an application, we explicitly describe $X_d(\mathbb{Q})$ for certain $d\geq3$, where $X_d: T_d(x)+T_d(y)=1$ and $T_d$ is the monic Chebychev polynomial of degree $d$. Moreover, we show how this later problem relates to orbit intersection problems in dynamics. Finally, we construct a new family of genus $3$ curves which break the Hasse principle, assuming the parity conjecture, by specifying our results to quadratic twists of $x^4-4x^2-4y^2+y^4=-6$.            
\end{abstract}
\section{1. Introduction}{\label{Intro}}

\indent \indent Although there has been considerable progress in recent years, effectively determining the complete set of rational points on a curve remains a difficult task. This task becomes increasingly difficult if one wishes to study the rational points in families of curves. However, understanding the behavior of rational points in families provides a good testing ground for conjectures on curves in general, making it a useful enterprise. 

One rare example of a family of curves where one can explicitly bound the set of rational points in many circumstances is the Fermat quartic $F_b: x^4+y^4=b$, which was studied in detail by Dem'janenko \cite{Dem}. He noticed that $F_b$ possesses two maps $\phi_1(x,y)=(x^2,xy)$ and $\phi_2(x,y)=(y^2,xy)$ to the elliptic curve $E_b:y^2=x\cdot(x^2-b)$, from which he could deduce that for all $4$-th power free rational numbers $b\geq3$, if $\rank\big(E_b(\mathbb{Q})\big)\leq1$, then $F_b(\mathbb{Q})$ is empty; see \cite{Dem} for details. 

However, after noticing that it is the symmetry (in $x$ and $y$) of $F_b$ that equips us with multiple mappings, we expand the class of curves on which Dem'janenko's technique may be applied. As a first example, let $K/\mathbb{Q}$ be a number field and let $a$ and $b$ be in $K$. We define the curves 
\begin{equation}{\label{Quartic}} F=F_{(a,b)}: x^4+ax^2+ay^2+y^4=b\;\;\;\;\;\text{and}\;\;\;\;\ E=E_{(a,b)}: y^2=x\cdot\big(x^2-4ax-(16b+4a^2)\big).
\end{equation} 
 One checks that both $F$ and $E$ are nonsingular when \[\Delta_F=\Delta_{(a,b)}:=b\cdot(a^2+2b)\cdot(a^2+4b)\neq0,\] and that we have two maps $\phi_1,\phi_2: F_{(a,b)}\rightarrow E_{(a,b)}$ given by 
 \begin{equation}{\label{maps}}\phi_1(x,y)=\left(-4x^2,x\cdot(8y^2+4a)\right)\;\;\;\;\;\text{and}\;\;\;\;\;\phi_2(x,y)=\left(-4y^2,y\cdot(8x^2+4a)\right).\end{equation}
Similarly, for $\alpha\in K^*/(K^*)^2$, let $F^{(\alpha)}=F_{(\alpha\cdot a,\alpha^2\cdot b)}$ and $E^{(\alpha)}=E_{(\alpha\cdot a,\alpha^2\cdot b)}$ be the quadratic twists of $F$ and $E$ by $\alpha$ respectively. As an application of Dem'janenko's framework, we prove the following theorem. 
\begin{restatable}{thm}{Hasse}
\label{thm:Hasse} Let $F$ and $E$ be the symmetric quartic and elliptic curves corresponding to $(a,b)=(-4,-6)$. Then all of the following statements hold:  
\begin{center}\begin{enumerate}
\item If $p>0$ is a prime such that $p\equiv1\bmod{24}$, then $F^{(p)}$ is everywhere locally solvable.   
\item The global root number $W(E^{(p)})=-1$ for all positive, odd primes. 
\item If $\alpha >7\cdot10^{74}$ is square-free and $\rank\left(E^{(\alpha)}(\mathbb{Q})\right)\leq1$, then $F^{(\alpha)}(\mathbb{Q})=\varnothing$.
\item If $p\notequiv\pm{1}\bmod{16}$, then $\rank(E^{(p)})\leq2$. 
\item Assuming the parity conjecture, if $p>3\cdot10^{74}$ and $p\equiv25\bmod{48}$, then $F^{(p)}(\mathbb{Q})=\varnothing$, and $F^{(p)}$ breaks the Hasse principle.
\end{enumerate}
\end{center}
\end{restatable}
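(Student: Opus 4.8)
The plan is to establish the four preliminary items (1)--(4) and then assemble the Hasse-principle failure in (5) from them together with the parity conjecture. I would prove (1) by a place-by-place analysis: over $\mathbb{R}$ solvability is elementary (the real locus of $x^4-4px^2-4py^2+y^4=-6p^2$ is nonempty for $p>0$), while for the finite places the congruence $p\equiv1\bmod24$ controls the awkward primes $2$ and $3$ and reduces the remaining primes to quadratic reciprocity together with Hensel lifting of smooth points. For (2) I would compute the global root number $W(E^{(p)})$ as the product of local root numbers $W_v(E^{(p)})$, using the standard formulas for the local root number of a quadratic twist in terms of the conductor exponent and the Kodaira type; the claim to verify is that for every positive odd prime the local contributions multiply to $-1$.

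For (3), the heart is Dem'janenko's descent via the two maps $\phi_1,\phi_2\colon F^{(\alpha)}\to E^{(\alpha)}$ of (\ref{maps}). A rational point $P$ on $F^{(\alpha)}$ produces two points $\phi_1(P),\phi_2(P)\in E^{(\alpha)}(\mathbb{Q})$ whose canonical heights are comparable to the height of $P$ up to an explicit error depending on $\alpha$; when $\rank\big(E^{(\alpha)}(\mathbb{Q})\big)\leq1$ the two images are forced to be nearly dependent, and a quantitative height inequality (the source of the constant $7\cdot10^{74}$) rules out any point of large height, after which the finitely many small-height candidates are excluded directly. For (4) I would run a $2$-descent on $E^{(p)}$: the hypothesis $p\not\equiv\pm1\bmod16$ makes certain local conditions at $2$ fail, trimming the $2$-Selmer group and yielding $\rank(E^{(p)})\leq2$.

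Granting (1)--(4), item (5) is a matter of lining up congruences. If $p\equiv25\bmod48$ then $p\equiv1\bmod24$, so (1) gives everywhere local solvability of $F^{(p)}$; and $25\equiv9\bmod16$ is not $\pm1$, so (4) gives $\rank(E^{(p)})\leq2$. By (2) the root number is $-1$, so under the parity conjecture the rank is odd, and an odd rank that is at most $2$ must equal $1$. Hence $\rank\big(E^{(p)}(\mathbb{Q})\big)\leq1$, and since a prime $p>3\cdot10^{74}$ is square-free and large enough for the descent bound of (3) (sharpened for prime twists in this congruence class, which accounts for the smaller threshold), we conclude $F^{(p)}(\mathbb{Q})=\varnothing$. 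Combining the emptiness of $F^{(p)}(\mathbb{Q})$ with its everywhere local solvability shows that $F^{(p)}$ violates the Hasse principle.

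The main obstacle is the quantitative descent in (3): making the comparison between the height of $P$ and the canonical heights $\hat h(\phi_i(P))$ fully explicit, with error terms uniform in $\alpha$, and then pushing the rank-$\leq1$ linear-dependence constraint through to an effective height bound is where all the genuine work (and the enormous constant) lives. By comparison the root-number and $2$-Selmer computations in (2) and (4) are delicate but standard, and the final assembly of (5) is essentially bookkeeping once the parity conjecture supplies the parity of the rank.
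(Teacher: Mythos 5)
Your overall architecture matches the paper's proof — local analysis for (1), local root numbers for (2), quantitative Dem'janenko descent for (3), a descent on the Selmer group for (4), and the same congruence bookkeeping for (5) — but three of your concrete claims diverge from what actually makes the argument work. First, in (4) you locate the failing local condition at $2$; in the paper the obstruction is at $p$. One descends via the $2$-isogeny with kernel $\{\mathcal{O},(0,0)\}$ and shows the homogeneous space $C_p$ has a $\mathbb{Q}_p$-point only if $x^4-4x^2+2$ (the minimal polynomial of $\zeta_{16}+\zeta_{16}^{-1}$) has a root modulo $p$, which happens exactly when $p\equiv\pm1\bmod{16}$: the mod-$16$ congruence is a splitting condition at the place $p$, not a condition at $2$. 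Second, in (1) you name only $2$ and $3$ as awkward primes, but $F^{(p)}$ also has bad reduction at $p$, where Hensel lifting from the (singular) reduction is unavailable; the paper instead uses $p\equiv1\bmod 8$ to produce a primitive eighth root of unity $\zeta\in\mathbb{Q}_p$, giving the point at infinity $[\zeta,1,0]$ (the points at infinity satisfy $x^4+y^4=0$). Your plan as written has no argument at the place $p$, and the remaining good primes are handled by the Weil bound plus a finite check, not quadratic reciprocity.

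Third, your account of (3) inverts the quantitative structure and omits the torsion control. The height-comparison error between $h(\phi_1(Q))$ and $h(\phi_2(Q))$ is \emph{uniform in $\alpha$}: one pulls $P_\alpha\in F^{(\alpha)}(\mathbb{Q})$ back to $P=(x_0/\sqrt{\alpha},\,y_0/\sqrt{\alpha})\in F(\bar{\mathbb{Q}})$ and works on the fixed curve $F$, with explicit constants from the appendix and a computed bound on $\hat{h}-h$ for $E$. The $\alpha$-dependence enters only through Nara's lower bound $\hat{h}(G)\geq\frac{1}{8}\log|\alpha|-0.8232$ on the generator of $E^{(\alpha)}(\mathbb{Q})$; for $\alpha>7\cdot10^{74}$ this forces $|n_1^2-n_2^2|=0$, i.e.\ exact dependence $n_1=\pm n_2$ — there is no residual finite list of ``small-height candidates'' to check, contrary to your sketch. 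One then needs $E^{(\alpha)}(\mathbb{Q})_{\Tor}\subseteq\{\mathcal{O},(0,0)\}$ uniformly in $\alpha$ (the paper gets this from CM by $\mathbb{Z}[\sqrt{-2}]$, supersingular reduction at primes $\equiv5,7\bmod{8}$, and Dirichlet), after which the exact relations $\phi_1(P_\alpha)\pm\phi_2(P_\alpha)\in\{\mathcal{O},(0,0)\}$ are excluded algebraically: the first forces $x=y$, hence a rational root of $x^4-4\alpha x^2+3\alpha^2$ and so $\alpha=3$; the second gives $x^2y^2=2\alpha^2$, impossible over $\mathbb{Q}$. Finally, your conjectured ``sharpening for prime twists'' to explain the $3\cdot10^{74}$ threshold in (5) is not in the paper: its own proof of (5) invokes part (3) with $p>7\cdot10^{74}$, so the smaller constant in the statement is a discrepancy in the paper itself, not something your assembly step should silently supply.
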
 
 
 Moreover, in Section \ref{Orbit}, we indicate how these symmetric quartic equations appear in the study of certain orbit intersection problems in quadratic dynamics. In particular, we use Dem'janenko's technique to prove results related to the rational points on the Chebychev curve \[X_d: T_d(x)+T_d(y)=1,\] discussed in the abstract. Interestingly, $X_d$ has several small rational points for most values of $d$. As a preliminary result, we have the following theorem. 
\begin{restatable}{thm}{Chebychev}
\label{thm:Chebychev}
Let $T_d$ denote the Chebychev polynomial of degree $d$ and let $X_{d}: T_{d}(x)+T_{d}(y)=1$. We describe $X_d(\mathbb{Q})$ explicitly in the following cases: 
\begin{enumerate}
\item If $d\equiv0\bmod{3}$, then $X_d(\mathbb{Q})=\varnothing$.  
 \item If $d\notequiv0\bmod{3}$ and $d\equiv0\bmod{4}$, then $X_{d}(\mathbb{Q})=\{(0,\pm{1}),(\pm{2},\pm{1}),(\pm{1},0),(\pm{1},\pm{2})\}$.
 \item Suppose that $d\notequiv0\bmod{3}$ and $d\equiv0\bmod{5}$.\\
 \indent (a). If $d$ is odd, then $X_d(\mathbb{Q})=\{(0,1),(1,0),(-1,2), (2,-1)\}$.\\
 \indent (b). If $d\equiv2\bmod{4}$, then $X_d(\mathbb{Q})=\{(\pm{1},\pm{2}),(\pm{2},\pm{1})\}$.   
  \end{enumerate}
\end{restatable}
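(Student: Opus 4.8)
The plan is to exploit the semigroup structure of Chebychev polynomials under composition, $T_{mn} = T_m \circ T_n$, to reduce every case to one of three base curves $X_3, X_4, X_5$, and then to descend along the resulting tower of Chebychev maps using a rationality-of-cosines argument. First I would record that for any factorization $d = d' e$ the morphism $\psi_e(x,y) = (T_e(x), T_e(y))$ carries $X_d$ into $X_{d'}$, since $T_d = T_{d'} \circ T_e$; hence it restricts to a map $X_d(\mathbb{Q}) \to X_{d'}(\mathbb{Q})$. Taking $d' = 3$ in case (1), $d' = 4$ in case (2), and $d' = 5$ in case (3) reduces each statement to the three base curves. Note that $T_d$ is even or odd according to the parity of $d$, and it is this parity, recorded through $d \bmod 4$, that ultimately separates subcases (3a) and (3b).

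For the base curves I would apply Dem'janenko's framework exactly as set up for the family $F_{(a,b)}$. The key point is that $X_4$ is literally a member of that family: since $T_4(x) = x^4 - 4x^2 + 2$, the curve $X_4$ coincides with $F_{(-4,-3)}$, whose companion is $E_{(-4,-3)}: y^2 = x(x^2 + 16x - 16)$. I would verify $\rank E_{(-4,-3)}(\mathbb{Q}) \le 1$ by a $2$-descent and then use the effective height bound coming from the two maps $\phi_1, \phi_2$ to enumerate $X_4(\mathbb{Q})$ completely, checking that every point has coordinates in $S := \{0, \pm 1, \pm 2\}$. The curve $X_3$ is a smooth plane cubic with the rational point $[1:-1:0]$ at infinity, hence an elliptic curve; I would show by descent that it has rank $0$ and that its finite Mordell--Weil group meets the affine chart trivially, so that $X_3$ has no affine rational solutions and therefore $X_d(\mathbb{Q}) = \varnothing$ whenever $3 \mid d$. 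The curve $X_5$ has genus $6$ and lies outside the sparse family, so here I would invoke the higher-genus generalization of the method (two independent maps to an abelian variety of rank at most one, namely a companion hyperelliptic Jacobian $J$ of $X_5$), verify that the relevant rank of $J(\mathbb{Q})$ is at most one, and enumerate to obtain $X_5(\mathbb{Q}) = \{(0,1),(1,0),(-1,2),(2,-1)\}$.

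The descent step is then uniform and clean. Because the base points all have coordinates in $S$, and because for $c \in S$ the value $c/2 \in \{0, \pm \tfrac12, \pm 1\}$ is a cosine of a rational multiple of $\pi$, the equation $T_e(x_0) = c$ forces $w^e$ to be a root of unity when $x_0 = w + w^{-1}$: if $x_0$ is rational then $|x_0| \le 2$ forces $x_0 = 2\cos\theta$ with $\cos(e\theta) = c/2$, so $\theta \in \pi\mathbb{Q}$, and Niven's theorem gives $x_0 \in S$. Consequently every point of $X_d(\mathbb{Q})$ has both coordinates in $S$, and it remains only to evaluate $T_d$ on the five-element set $S$. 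Using $T_d(2) = 2$, $T_d(-2) = 2(-1)^d$, $T_d(0) = 2\cos(d\pi/2)$, $T_d(1) = 2\cos(d\pi/3)$ and $T_d(-1) = 2\cos(2d\pi/3)$, the hypothesis $3 \nmid d$ together with $d \bmod 4$ pins down these five values exactly; solving $T_d(x_0) + T_d(y_0) = 1$ over $S \times S$ then reproduces the stated lists and simultaneously confirms that each listed point genuinely lies on $X_d$, yielding the claimed equalities.

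I expect the main obstacle to be the base case $X_5$. Unlike $X_4$, it is a genus-$6$ curve not of the special sparse shape, so one must both identify the correct pair of maps to a common abelian variety and carry out an honest rank verification for its companion Jacobian, since the unconditional nature of the theorem rests on that rank actually being at most one. By contrast, the composition reduction and the final Niven-theorem descent are essentially formal, and the rank computations for $X_3$ and $X_4$ are routine $2$-descents.
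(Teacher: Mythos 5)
Your overall architecture matches the paper's for cases (1) and (2): reduction along the nesting property $T_{mn}=T_m\circ T_n$, the identification of $X_3$ with a rank-zero elliptic curve (the paper computes its Jacobian $B: y^2=x^3-27x+189/4$ to have trivial Mordell--Weil group, forcing the affine set to be empty), and the identification $X_4 = F_{(-4,-3)}$ with companion $E_{(-4,-3)}: y^2=x(x^2+16x-16)$ of rank one, handled by the effective Dem'janenko bound exactly as in the paper. Your final descent step is in fact slicker than the paper's: the paper first shows $x_0\in\mathbb{Z}$ (a rational number integral over $\mathbb{Z}$, since $T_{d'}$ is monic in $\mathbb{Z}[x]$) and then invokes the growth inequality $|T_{d'}(x)|\geq|T_2(3)|=7$ for real $|x|\geq3$, whereas your Niven-theorem argument dispenses with integrality altogether; both are correct, since $|T_e(x_0)|\leq2$ does force $|x_0|\leq2$ for real $x_0$ (write $x_0=w+w^{-1}$ with $w$ real, $|w|>1$, and note $|w|^e+|w|^{-e}>2$).

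The genuine gap is exactly where you flagged the main obstacle: the base case $X_5$ is not resolved, and the route you sketch would likely fail rather than merely be laborious. Dem'janenko's method requires two \emph{independent} maps $\phi$ and $\phi\circ\lambda$ to a common abelian variety, where $\lambda(x,y)=(y,x)$; but $T_5(x)=x^5-5x^3+5x$ has intermediate terms, so $X_5$ is not of the sparse shape $X^{2m}+aX^m+aY^m+Y^{2m}=b$ from which the paper manufactures its maps to a hyperelliptic Jacobian, and you exhibit no substitute. Worse, any map built from the symmetric functions of $(x,y)$ factors through the quotient by $\lambda$, so for such a $\phi$ one gets $\phi\circ\lambda=\phi$ and the method degenerates to a single map. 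The paper uses the involution in the opposite way: it \emph{quotients} by $\lambda$, expanding $T_5(x)+T_5(y)$ in $u=x+y$ and $v=x^2+y^2$ to land on a genus-two curve birational to $C': y^2=5x^6-50x^4+125x^2+20x$; Magma gives $\rank\big(\Jac(C')(\mathbb{Q})\big)=1$, which is strictly less than the genus, so the Chabauty--Coleman method applies and yields $C'(\mathbb{Q})=\{(0,0),(1,\pm{10})\}$, and pulling back along the quotient map recovers $X_5(\mathbb{Q})=\{(0,1),(1,0),(-1,2),(2,-1)\}$. The missing idea is thus a rank-versus-genus argument on the symmetric quotient: with rank one against genus two, Chabauty succeeds precisely in the regime where Dem'janenko has nothing to offer, and it is this switch of method at $X_5$, not a second independent map, that makes part (3) unconditional.
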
  
In fact, evidence suggests that the complete list of rational points depends only on the congruence class of $d\bmod{12}$, which we state in the following conjecture. 
\begin{conjecture}{\label{conjecture}} If $P=(x,y)\in X_d(\mathbb{Q})$ and $d\geq3$, then $x,y\in\{0,\pm{1},\pm{2}\}$. In particular, $\#X_d(\mathbb{Q})=0,4,8$ or $12$ and $X_d(\mathbb{Q})$ can be described explicitly in terms of the congruence class of $d\bmod{12}$. 
\end{conjecture}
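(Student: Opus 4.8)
The plan is to reduce the full conjecture to finitely many ``prime'' cases by exploiting the compositional structure $T_{d}=T_{d/e}\circ T_{e}$ of the Chebychev polynomials. Writing $\psi_e(x,y)=(T_e(x),T_e(y))$, this identity shows that $\psi_e$ is a finite $\mathbb{Q}$-morphism $X_d\to X_{d/e}$ whenever $e\mid d$, since $T_{d/e}(T_e(x))+T_{d/e}(T_e(y))=T_d(x)+T_d(y)=1$; hence every $P=(x,y)\in X_d(\mathbb{Q})$ maps to a rational point of $X_{d/e}$. First I would establish the preimage lemma: if $e\geq1$ and $x\in\mathbb{Q}$ satisfies $T_e(x)\in\{0,\pm1,\pm2\}$, then $x\in\{0,\pm1,\pm2\}$. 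This is where Niven's theorem enters. In the normalization $T_e(2\cos\theta)=2\cos(e\theta)$, the condition $T_e(x)\in\{0,\pm1,\pm2\}$ forces $|x|\leq2$ (as $|T_e(x)|>2$ for $|x|>2$), so $x=2\cos\theta$ with $\cos(e\theta)\in\{0,\pm\tfrac12,\pm1\}$; then $e\theta$, and hence $\theta$, is a rational multiple of $\pi$, and the only rational values among $2\cos(\pi\cdot\mathbb{Q})$ are $0,\pm1,\pm2$.

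Granting the lemma, any rational point of $X_d$ whose image under some $\psi_e$ has coordinates in $\{0,\pm1,\pm2\}$ must itself have coordinates in $\{0,\pm1,\pm2\}$. Since Theorem~\ref{thm:Chebychev} already describes $X_{d'}(\mathbb{Q})$, with all coordinates in $\{0,\pm1,\pm2\}$, whenever $3\mid d'$, or $4\mid d'$ with $3\nmid d'$, or $5\mid d'$ with $3\nmid d'$, choosing such a divisor $d'$ of $d$ settles the conjecture for every $d$ divisible by $3$, $4$, or $5$. The remaining $d$ are exactly those divisible by none of $3,4,5$; for these one takes $d'=p$ to be the least (necessarily odd, $\geq7$) prime factor of $d$, reducing the conjecture to the single assertion that every point of $X_p(\mathbb{Q})$ has coordinates in $\{0,\pm1,\pm2\}$ for primes $p\geq7$. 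The ``$\bmod 12$'' clause is then pure bookkeeping once coordinates are known to be small: the values $T_d(0)=2\cos(d\pi/2)$, $T_d(1)=2\cos(d\pi/3)$, $T_d(-1)=2\cos(2d\pi/3)$, $T_d(2)=2$ and $T_d(-2)=2(-1)^{d}$ all depend only on $d\bmod12$, so one records which pairs from $\{0,\pm1,\pm2\}^2$ satisfy $T_d(x)+T_d(y)=1$; the coordinate swap and the admissible sign changes then force $\#X_d(\mathbb{Q})\in\{0,4,8,12\}$.

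The entire difficulty is thus concentrated in the primes $p\geq7$, where $X_p$ is a curve of geometric genus growing quadratically in $p$ (its plane model has degree $p$), so that $X_p(\mathbb{Q})$ is finite by Faltings but not effectively so. Here I would run Dem'janenko's technique in the form used throughout this paper: exploit the involution $(x,y)\mapsto(y,x)$ together with the compositional factorizations of $T_p$ to produce several independent $\mathbb{Q}$-morphisms from $X_p$ to the companion hyperelliptic Jacobians attached to the symmetric families $x^{2m}+ax^m+ay^m+y^{2m}=b$, and then apply the associated height inequalities to bound $X_p(\mathbb{Q})$ whenever the Mordell--Weil ranks of these Jacobians are strictly less than the number of independent maps. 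Together with the evident small solutions coming from $\{0,\pm1,\pm2\}$, this would determine $X_p(\mathbb{Q})$ exactly.

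The main obstacle is precisely this rank hypothesis. Dem'janenko's method is conditional on such a bound, and controlling the Mordell--Weil ranks of the relevant hyperelliptic Jacobians uniformly as $p\to\infty$ is at present out of reach; even for a fixed large prime it requires a successful explicit descent. For this reason I expect the conjecture to be establishable only for individual primes $p$, or for restricted congruence classes as in Theorem~\ref{thm:Chebychev}, where the companion ranks can be shown to be small --- which is exactly why the statement is recorded as a conjecture rather than proved in general.
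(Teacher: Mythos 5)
The statement you were given is Conjecture~\ref{conjecture}: the paper does not prove it, and offers only the closing remark that it suffices to establish the conjecture for primes, via the nesting property $T_{nm}=T_n\circ T_m$ and the fact that $x_0\in\mathbb{Z}$ with $|T_d(x_0)|\leq2$ forces $|x_0|\leq2$ by the inequality in (\ref{Chebychev Inequality}). Your unconditional reduction is essentially that remark in expanded form, and it is correct; your preimage lemma takes a genuinely different route from the paper's. The paper exploits that $T_{d'}$ is monic with integer coefficients, so a rational $x_0$ with $T_{d'}(x_0)\in\{0,\pm1,\pm2\}$ is integral over $\mathbb{Z}$, hence an integer, and then (\ref{Chebychev Inequality}) pins it down; you instead invoke Niven's theorem on rational values of $2\cos(\pi\mathbb{Q})$. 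Both work, and yours dispenses with the integrality step, though the paper's is more elementary. One slip in your bookkeeping: for $d\equiv2\bmod4$ with no factor $3$ or $5$ (e.g.\ $d=14$), the \emph{least} prime factor of $d$ is $2$, and $X_2:x^2+y^2=5$ is a conic with infinitely many rational points, so reduction to it is useless; you must take the least \emph{odd} prime factor, which exists and is $\geq7$ for such $d$. Your $\bmod\,12$ bookkeeping (periods $4$, $3$, $2$ for $T_d(0)$, $T_d(\pm1)$, $T_d(\pm2)$) is correct and matches the paper's Lemma~\ref{Chebychev Lemma}.

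The genuine gap is in your plan for the prime case $p\geq7$, and it is structural, not merely the rank hypothesis you flag. You propose to use ``the compositional factorizations of $T_p$'' to produce maps from $X_p$ to the companion hyperelliptic Jacobians of the family $x^{2m}+ax^m+ay^m+y^{2m}=b$. But $T_p$ is compositionally prime for $p$ prime --- there is no factorization $T_p=T_{p/e}\circ T_e$ with $1<e<p$ --- and $T_p(x)+T_p(y)=1$ does not take the symmetric $2m$-form in any variables obtained this way; those equations arise in the paper precisely from \emph{even} iterates (the $X_{2^m}$ construction with $f=x^2-2$) and from $d\equiv0\bmod4$ via the cover $X_d\to X_4$. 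For prime degree the only evident extra structure on $X_p$ is the swap involution $(x,y)\mapsto(y,x)$, and indeed the paper's treatment of the one prime it handles, $p=5$, proceeds not by Dem'janenko at all but by passing to the quotient in the symmetric functions $u=x+y$, $v=x^2+y^2$, obtaining a genus-$2$ curve, and applying Chabauty--Coleman. So even granting favorable Mordell--Weil ranks, your proposed machinery for $X_p$ has no maps to run on. That said, your proposal honestly stops short of claiming a proof and correctly locates the open difficulty at the primes, which is consistent with the paper recording this statement as a conjecture rather than a theorem.
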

Dem'janenko's work has been generalized by Manin. In particular, let $C$ be a curve, $A$ be an abelian variety, and $\{\phi_1,\phi_2,\dots\phi_n\}$ be maps $\phi_i:C\rightarrow A$. If the $\phi_i$ are independent elements of the $\mathbb{Z}$-module $\Hom(C,A)$, then one can effectively determine the rational points on $C$, provided that $\rank(A(K))\leq n-1$; see \cite[Theorem 13.3.1]{Coh}. Moreover, Silverman has shown that if we have a finite Galois-module acting on both $C$ and $A$ in a way that commutes with the maps $\phi_i$, then we can bound the rational points on twists of $C$ in terms of the rank of the twists of $A$; see \cite[Theorem 1]{Silv} for a precise statement.  

However, even in the $n=2$ case, it can be difficult to verify this independence. For instance, one usually needs to compute the degree of $(\phi_1+\phi_2)^*D$, subject to some choice of symmetric ample divisor $D$ on the abelian variety; for example, see \cite[Proof of Theorem 1]{Silv}. 

However, since the group law on abelian varieties of large dimension is normally quite complicated, this computation seems less than feasible. Nonetheless, we show how one can circumvent this degree calculation, subject to some constraints. As an application, we further expand the class of curves on which Dem'janenko's technique may be applied by introducing some higher degree symmetric equations analogous to those defined in (\ref{Quartic}) above. 

Specifically, let $m\geq3$ be an odd integer and define \[C=C_{(a,b)}: X^{2m}+aX^m+aY^m+Y^{2m}=b\;\;\;\text{and}\;\;B=B_{(a,b)}:y^2=-x^{2m}-ax^{m}+(a^2/4+b).\] Furthermore, let $\infty_+$ and $\infty_-$ be the points at infinity on $B$ (possibly defined over a quadratic extension) and let $J=J_{(a,b)}$ be the Jacobian of $B$, which we identify with $\Pic^0(B)$. As in the quartic case, we have maps $\phi_1,\phi_2:C\rightarrow J$ defined over $K$, given by \[\phi_1(x,y)=\big[2\cdot\big(x\;,\;y^m+a/2\big)-(\infty_++\infty_-)\big],\;\;\text{and}\;\;\;\phi_2(x,y)=\big[2\cdot\big(y\;,\;x^m+a/2\big)-(\infty_++\infty_-)\big].\]       

Similarly, set $C_\alpha=C_{(\alpha\cdot a,\alpha^2\cdot b)}$ and $B_\alpha=B_{(\alpha\cdot a,\alpha^2\cdot b)}$ for all $\alpha\in K^*/(K^*)^m$. After adapting the argument in the rank one case of \cite[Theorem 1]{Silv}, we establish the following result.

\begin{restatable}{thm}{Symm}
\label{thm:Symm} For all but finitely many $\alpha\in K^*/(K^*)^m$, if $\rank\left(J_\alpha(K)\right)\leq1$, then $C_\alpha$ has no $K$-affine points and $C_\alpha(K)\subseteq\big\{[1,\zeta,0]\;\big\vert\;\zeta^{2m}+1=0\}$. In particular, if $C_{\alpha}$ is defined over the rational numbers, then $C_\alpha\big(\mathbb{Q}\big)=\varnothing$.  
\end{restatable}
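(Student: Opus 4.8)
The plan is to run the Dem'janenko--Manin height method with the two maps $\phi_1,\phi_2\colon C_\alpha\to J_\alpha$, adapting the rank-one case of Silverman's twisting theorem \cite{Silv} to the degree-$m$ twists $\alpha\in K^*/(K^*)^m$. Fix a symmetric ample divisor $D$ on $J$, let $\hat h$ be the associated Néron--Tate height with bilinear pairing $\langle\,,\rangle$, and equip $C$ with a Weil height $h$ attached to a $\sigma$-invariant divisor, where $\sigma\colon(x,y)\mapsto(y,x)$ is the coordinate swap. The starting observation is that $\sigma$ interchanges the two maps, $\phi_1\circ\sigma=\phi_2$ and $\phi_2\circ\sigma=\phi_1$, so upon setting $s=\phi_1+\phi_2$ and $t=\phi_1-\phi_2$ one has $s\circ\sigma=s$ and $t\circ\sigma=-t$; that is, $s$ and $t$ lie in the $+1$ and $-1$ eigenspaces of $\sigma^*$ on $\Hom(C,J)\otimes\mathbb{Q}$, respectively.

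This is exactly where I avoid computing $\deg\big((\phi_1+\phi_2)^*D\big)$. Since $s$ and $t$ occupy complementary eigenspaces, to prove that $\phi_1,\phi_2$ are independent in $\Hom(C,J)\otimes\mathbb{Q}$ it suffices to check that each of $s,t$ is non-constant, which is immediate from the explicit formulas: for instance $t(P)=\big[2(x,y^m+a/2)-2(y,x^m+a/2)\big]$ visibly varies with $P$, and similarly for $s$. The same symmetry kills the cross term to leading order: writing $\langle s(P),t(P)\rangle=\mu\,h(P)+O(1)$ and applying $\sigma$ gives $\mu\,h(P)+O(1)=\langle s(\sigma P),t(\sigma P)\rangle=-\langle s(P),t(P)\rangle=-\mu\,h(P)+O(1)$, using $h(\sigma P)=h(P)$, whence $\mu=0$ and $\langle s(P),t(P)\rangle=O(1)$. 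Thus the Gram structure of $\{s,t\}$ against $\hat h$ is positive definite to leading order, and I never need the intersection number that Silverman's original computation requires.

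Next I record the height comparisons, made uniform across the family. Geometrically each twist is isomorphic to $C$ over $\bar K$, with $\phi_i^{(\alpha)}$ corresponding to $\phi_i$, so the leading coefficients are twist-independent: for every affine $P\in C_\alpha(K)$,
\[
\hat h\big(s(P)\big)=\lambda_s\,h_\alpha(P)+O(1),\qquad \hat h\big(t(P)\big)=\lambda_t\,h_\alpha(P)+O(1),
\]
with $\lambda_s,\lambda_t>0$ independent of $\alpha$. The essential input adapted from \cite{Silv} is that the $O(1)$, which a priori could depend on $\alpha$, is controlled uniformly using the compatibility of the $\mu_m$-twisting action with the $\phi_i$. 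Now suppose $\rank\left(J_\alpha(K)\right)\le1$. Then $J_\alpha(K)\otimes\mathbb{R}$ is at most one-dimensional, so $s(P)$ and $t(P)$ are dependent modulo torsion and Cauchy--Schwarz is an equality, $\langle s(P),t(P)\rangle^2=\hat h(s(P))\,\hat h(t(P))$. Combining with the comparisons and the bound $\langle s(P),t(P)\rangle=O(1)$ yields
\[
\lambda_s\lambda_t\,h_\alpha(P)^2+O\big(h_\alpha(P)\big)=O(1),
\]
so $h_\alpha(P)$ is bounded by a constant independent of $\alpha$. For all but finitely many $\alpha$ this uniform bound is incompatible with the existence of any affine $K$-point, giving no affine $K$-points on $C_\alpha$. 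The only remaining possibility is a point at infinity, where the leading form $X^{2m}+Y^{2m}$ vanishes, forcing $C_\alpha(K)\subseteq\{[1,\zeta,0]:\zeta^{2m}+1=0\}$; over $\mathbb{Q}$ the equation $\zeta^{2m}=-1$ has no solution, whence $C_\alpha(\mathbb{Q})=\varnothing$.

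The main obstacle is making the $O(1)$ in the height comparisons genuinely uniform in $\alpha$, sharply enough that the resulting bound on $h_\alpha(P)$ excludes affine points rather than merely bounding their number. This forces an analysis of the local height contributions on the twisted hyperelliptic curves $B_\alpha$---whose bad reduction and behavior at $\infty_\pm$ vary with $\alpha$---together with a careful account of how the degree-$m$ twisting rescales both $h_\alpha$ and $\hat h$; it is precisely here that the finite exceptional set of $\alpha$ (those admitting small affine points, or where the twist degenerates) arises. By contrast, verifying the non-constancy of $s$ and $t$ and the eigenspace splitting is routine.
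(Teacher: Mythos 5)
Your skeleton is the same as the paper's (Dem'janenko--Manin with $\phi_2=\phi_1\circ\lambda$ for the coordinate swap $\lambda$, dodging the computation of $\deg(\phi_1+\phi_2)^*D$ by exploiting the symmetry), but the proposal has two genuine gaps, and the first is fatal as written. You dismiss the non-constancy of $s=\phi_1+\phi_2$ and $t=\phi_1-\phi_2$ as ``visible from the explicit formulas,'' but a divisor class can be constant while its underlying divisor visibly varies: on an elliptic curve, $P\mapsto[(P+T)-(P)]$ has moving support yet is the constant class $[T-\mathcal{O}]$. Since your coefficients $\lambda_s,\lambda_t$ are positive exactly when $s,t$ are non-constant, your entire Cauchy--Schwarz endgame rests on the step you waved off. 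This non-constancy is in fact where the paper does its real work: assuming $\phi_1+\phi_2$ constant, it evaluates at the points at infinity to pin the constant to $\mathcal{O}_J$, multiplies the relation by $(m-1)/2$ so that both sides are represented by effective, reduced divisors of degree $m-1$ on the genus-$(m-1)$ curve $B$ (avoiding Weierstrass points), and invokes uniqueness of such representatives \cite[Prop.~1]{hyperelliptic} to force $(x,y^m+a/2)=\big(y,-(x^m+a/2)\big)$, hence $x=y$, absurd. For $t$ there is the extra wrinkle that the points at infinity of $C$ map to the two classes $\pm 2[\infty_+-\infty_-]$, so constancy forces $4[\infty_+-\infty_-]=0$; the paper then uses that $[\infty_+-\infty_-]$ is killed by $m$ (the divisor of an explicit function) and that $m$ is odd to conclude $[\infty_+-\infty_-]=\mathcal{O}_J$ before rerunning the effective-divisor argument. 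None of this is routine, and your proposal contains no substitute for it.

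The second gap is the uniformity in $\alpha$, which you honestly flag as ``the main obstacle'' but do not resolve, and the statement you would need --- a twist-independent $O(1)$ in $\hat h\big(s(P)\big)=\lambda_s h_\alpha(P)+O(1)$ on $C_\alpha$, sharp enough to exclude affine points via Northcott --- is not what the paper proves and is dubious as stated, since the comparison constants on the twisted equations genuinely grow with $\alpha$. The paper's mechanism is different: it transfers points to the fixed curve $C$ via the $\bar K$-isomorphism $i_\xi$, obtains $n_1=\pm n_2$ for almost all twists because the canonical height of a non-torsion generator grows like $\log\big\vert\Norm_{K/\mathbb{Q}}\Disc K_C(\xi)/K\big\vert$ \cite[Theorem 6]{SilvDuke}, shows that for almost all twists the residual torsion lies in $A[N]$, and then pigeonholes infinitely many putative counterexample points (infinitely many distinct ones, because heights of points not fixed by $G$ on twists of $C$ also grow, \cite[Theorem 3]{SilvDuke}) to force $\phi_1\pm\phi_2$ constant --- contradicting the non-constancy established above. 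So the finite exceptional set of $\alpha$ arises from discriminant-growth lower bounds plus pigeonhole, not from a uniform height bound. Your eigenspace observation that kills the cross term ($\mu=0$) is a nice repackaging of the fact that $\phi_1$ and $\phi_2$ have equal height growth, which the paper gets from linearity of $\lambda$, but it cannot stand in for the two missing inputs.
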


To prove our theorems, since we are unable to do the degree calculation mentioned above, we develop an alternative method that avoids this calculation. We proceed with this task in Section \ref{Symm}.   

\section{2. Symmetric Diophantine Problems}{\label{Symm}} 

\indent \indent We begin by fixing some notation. Let $K/\mathbb{Q}$ be a numberfield and $V_{/K}$ be a variety defined over $K$. For finite $\Gal(\bar{K}/K)$-modules $G$ that admit a Galois equivariant  embedding into $\Aut(V)$, we define \[V[G]:=\{P\in V(\bar{K})\vert\;\gamma(P)=P\;\text{for some}\;\gamma\in G,\;\gamma\neq1\}.\]  
Moreover, for $\xi\in H^1(K,G)$, let $K_V(\xi)$ be the \emph{splitting field of} $\xi$ in $H^1\big(K,\Aut(V)\big)$, that is, $K_V(\xi)$ is the smallest extension of $K$ such that $\xi$ is trivial in $H^1\big(K_V(\xi),\Aut(V)\big)$. Finally, for an abelian variety $A$, let $\Aut^*(A)$ be the set of automorphisms of $A$ (as both a group and a variety). 

Every cohomology class $\xi\in H^1(K,G)$ gives rise to a twist $V_\xi$ of $V$ which is defined over $K$. Moreover, if $G$ admits a Galois equivariant embedding into $\Aut^*(A)$, then $A_\xi$ is an abelian variety. (See, for example, \cite[X.2]{SilvElliptic}.) With these definitions in place, we prove the following theorem.            
\begin{restatable}{thm}{General}
\label{thm:General}
 Suppose that $C\subset\mathbb{P}^n$ is a curve, $A$ is an abelian variety, and $N$ is an integer such that all of the following conditions are satisfied: 
\begin{enumerate}
\item There is a finite $\Gal(\bar{K}/K)$-module $G$ together with a Galois equivariant embedding into both $\Aut(C)$ and $\Aut^*(A)$ such that $A[G]\subseteq A[N]$.
\item There exists $\lambda\in PGL_{n+1}(K)$ such that $\lambda(C)\subseteq C$, and $\lambda$ commutes with the action of $G$ on $C$.  
\item There is a map $\phi:C\rightarrow A$ which commutes with the action of $G$ on $C$ and $A$.
\item The maps $(\phi\circ\lambda|_C)+\phi:C\rightarrow A$ and $(\phi\circ\lambda|_C)-\phi:C\rightarrow A$ are non-constant. 
\end{enumerate}   
Then for almost all $\xi\in H^1(K,G)$, if $\rank(A_{\xi}(K))\leq1$, then $C_{\xi}(K)\subseteq C_{\xi}[G]$.   
\end{restatable}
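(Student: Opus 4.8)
The plan is to run the Dem'janenko--Manin height argument on the two morphisms $\phi$ and $\phi\circ\lambda|_C$ simultaneously, but to carry it out \emph{geometrically} over $\bar K$ so that every implied constant is independent of the twisting class $\xi$. First I would fix the Weil height $h=h_{\mathcal O(1)}$ coming from $C\subset\mathbb P^n$ and choose a symmetric ample divisor $D$ on $A$ whose class is $G$-invariant (replace $D$ by $\sum_{\gamma\in G}\gamma^*D$), so that the associated canonical height $\hat h$ is $G$-invariant and, being geometric, is preserved by every $\bar K$-isomorphism of twists. Given $\xi\in H^1(K,G)$ and $P\in C_\xi(K)$, I would lift $P$ through the twisting isomorphism to $\tilde P\in C(L)$, where $L=K_C(\xi)$ is the splitting field; since $G$ is a fixed finite module, $[L:K]$ is bounded independently of $\xi$ (by $|G|$). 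Because $\phi$, $\lambda$ and $\phi\circ\lambda$ commute with $G$ by conditions (2) and (3), they descend to $K$-morphisms on the twists, and under the $\bar K$-identifications $C_\xi\cong C$, $A_\xi\cong A$ the two points $\phi_\xi(P),(\phi\circ\lambda)_\xi(P)\in A_\xi(K)$ correspond to $\phi(\tilde P),\phi(\lambda\tilde P)\in A(\bar K)$, both lying in the image of the rank-$\le 1$ group $A_\xi(K)$.

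The key observation, which is precisely what lets us bypass the computation of $\deg\big((\phi_1+\phi_2)^*D\big)$, is that $\lambda\in PGL_{n+1}(K)$ preserves $\mathcal O(1)$, so $h(\lambda Q)=h(Q)+O(1)$ on $C(\bar K)$. Writing $\hat h(\phi(Q))=d\cdot h(Q)+O(\sqrt{h(Q)})$ for the positive leading coefficient $d$ attached to the nonconstant morphism $\phi$ (the error being the height of a degree-$0$ divisor on the \emph{fixed} curve $C$, hence with a uniform implied constant), the height-invariance of $\lambda$ forces $\hat h(\phi(\lambda Q))=d\cdot h(Q)+O(\sqrt{h(Q)})$ with the \emph{same} $d$. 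Thus the two morphisms have equal leading coefficients $d_1=d_2=d$. Letting $e$ denote the leading coefficient of $\langle\phi(\cdot),\phi(\lambda\cdot)\rangle$ and $d_\pm$ the leading coefficients of the nonconstant maps $(\phi\circ\lambda)\pm\phi$ (positive by condition (4)), the parallelogram law gives $d_\pm=d_1+d_2\pm 2e$, whence the Gram determinant of the pair satisfies
\[
d_1d_2-e^2=\tfrac14\big(d_+d_--(d_1-d_2)^2\big)=\tfrac14\,d_+d_->0 .
\]
So nonconstancy of $(\phi\circ\lambda)\pm\phi$ now \emph{forces} $\mathbb R$-independence, with no divisor computation.

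Next I would exploit $\rank(A_\xi(K))\le 1$: in a group of rank at most one the canonical-height pairing is degenerate on any two elements, so $\hat h(\phi\tilde P)\,\hat h(\phi\lambda\tilde P)=\langle\phi\tilde P,\phi\lambda\tilde P\rangle^2$. Substituting the asymptotics above, the coefficient of $h(\tilde P)^2$ is $d_1d_2-e^2=\tfrac14 d_+d_->0$, so this equality reads $\tfrac14 d_+d_-\,h(\tilde P)^2=O\big(h(\tilde P)^{3/2}\big)$ with a \emph{uniform} implied constant, giving a bound $h(\tilde P)\le M$ independent of $\xi$. Condition (1), $A[G]\subseteq A[N]$, enters here to pin down the excluded locus: if $\gamma\tilde P=\tilde P$ for some $1\ne\gamma\in G$, then $\phi(\tilde P)=\gamma\phi(\tilde P)\in A[G]\subseteq A[N]$ is torsion, so the $G$-fixed points are exactly those the estimate neither constrains nor needs to, and they are precisely the points gathered into $C_\xi[G]$; the same inclusion controls the $A[N]$-torsion lost in transferring the rank hypothesis from $A_\xi(K)$ to the twisted subgroup of $A(L)$.

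Finally I would combine the uniform height bound with the bounded degree $[L:K]\le|G|$: by Northcott's theorem the set $S=\{\tilde P\in C(\bar K):[K(\tilde P):K]\le|G|,\ h(\tilde P)\le M\}$ is finite and independent of $\xi$, and every $K$-point of every twist lifts into $S$. For $\tilde P\in S$ with trivial $G$-stabilizer the cocycle relation $\sigma\tilde P=\xi_\sigma\tilde P$ determines $\xi$ uniquely, so such a $\tilde P$ is the lift of a rational point for at most one class $\xi$; for $\tilde P$ with nontrivial stabilizer the resulting point lies in $C_\xi[G]$. Discarding the finitely many $\xi$ attached to the finitely many non-fixed points of $S$ therefore yields $C_\xi(K)\subseteq C_\xi[G]$ for all remaining $\xi$, which is the assertion. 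I expect the genuine obstacle to be making the height estimate uniform in $\xi$: one must verify that the $G$-invariant polarization renders the twisting isomorphisms height-preserving, that the $O(\sqrt h)$ term (a degree-zero divisor height on the fixed curve) hides no $\xi$-dependence, and that the descent comparison between $A_\xi(K)$ and the twisted subgroup of $A(L)$ costs only bounded $A[N]$-torsion; everything else is formal once $d_1=d_2$ and the positivity $d_+d_->0$ are in hand.
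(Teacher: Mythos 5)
Your argument is correct, but it is genuinely not the paper's proof; the two share only the pivotal observation that linearity of $\lambda\in PGL_{n+1}$ forces $\phi$ and $\phi\circ\lambda$ to have the same leading height coefficient $d_1=d_2$. The paper never establishes independence of the two maps: it writes $\phi_i^{(\xi)}(P_\xi)=n_i^{(\xi)}S_\xi+T_i^{(\xi)}$ against a Mordell--Weil generator, uses $h_D(\phi_1(Q))=h_D(\phi_2(Q))+O(1)$ to get $\big\vert (n_1^{(\xi)})^2-(n_2^{(\xi)})^2\big\vert\le M/\hat{h}(S_\xi)$, and then invokes Silverman's lower bounds \cite{SilvDuke} --- $\hat{h}(S_\xi)$, and $h_C$ of non-$G$-fixed points, grow with the discriminant of the splitting field $K_C(\xi)$ --- to force $n_1^{(\xi)}=\pm n_2^{(\xi)}$ for almost all $\xi$; its endgame pigeonholes the torsion values $\phi_1(P_i)\pm\phi_2(P_i)$ into the finite set $A[N]$ (this is where hypothesis (1) is essential, via \cite[Theorem 6]{SilvDuke}) and contradicts the non-constancy hypothesis (4) because a non-constant morphism from a curve takes each value finitely often. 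You instead convert (4) into genuine quadratic-form independence: with $d_1=d_2$, the parallelogram law gives $d_1d_2-e^2=\tfrac14 d_+d_->0$ (ample divisors pull back with positive degree under non-constant maps), and rank $\le1$ forces the Cauchy--Schwarz \emph{equality} $\hat{h}(\phi\tilde{P})\,\hat{h}(\phi\lambda\tilde{P})=\langle\phi\tilde{P},\phi\lambda\tilde{P}\rangle^2$, yielding a height bound on $\tilde{P}$ uniform in $\xi$; finiteness of bad classes then follows from Northcott over the bounded-degree splitting fields together with your rigidity observation that a lifted point with trivial $G$-stabilizer determines the cocycle, hence the class $\xi$. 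What your route buys: it is essentially self-contained (no appeal to \cite{SilvDuke} at all), it never actually uses hypothesis (1) --- so you prove a marginally stronger statement --- and your Gram-determinant identity shows that the degree computation for $(\phi_1+\phi_2)^*D$, which the paper declares infeasible and deliberately circumvents, is unnecessary even for the Manin-style independence criterion. What the paper's route buys: the intermediate dichotomy that $\phi_1^{(\xi)}(P_\xi)\pm\phi_2^{(\xi)}(P_\xi)$ is torsion, as in (\ref{Torsion}), is exactly the form reused for the explicit computations in Theorems \ref{thm:Hasse} and \ref{thm:Chebychev}, and outsourcing uniformity to \cite{SilvDuke} spares the $O(\sqrt{h})$ error analysis your argument requires (heights attached to degree-zero divisors on the fixed curve $C$, and exact quadraticity of $\hat{h}$ for the symmetric $D$ in the pairing expansion $\langle\phi\tilde{P},\phi\lambda\tilde{P}\rangle=e\,h(\tilde{P})+O\big(\sqrt{h(\tilde{P})}\big)$) --- these are the points to write out carefully, along with replacing your bound $[L:K]\le|G|$ by ``bounded in terms of $G$ alone,'' since one may first need to trivialize the Galois action on $G$, as in \cite[Remark 1]{Silv}.
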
 
\begin{proof}For convenience, let $\phi_1:=\phi$ and $\phi_2:=(\phi\circ\lambda)$. Choose a $\bar{K}$ isomorphism $i_\xi:C_\xi\rightarrow C$ such that the cocycle $\sigma\rightarrow i_\xi^{\sigma}\circ i_\xi^{-1}=\Xi_{\sigma}$ in $G$ represents the cohomology class $\xi$. Similarly, choose a $\bar{K}$-isomorphism $I_{\xi}:A_\xi\rightarrow A$ such that $I_\xi^\sigma\circ I_\xi^{-1}=\Xi_\sigma$ for all $\sigma\in\Gal(\bar{K}/K)$.

By assumptions $2$ and $3$, both $\phi_1$ and $\phi_2$ are defined over $K$ and commute with the action of $G$ on $C$ and $A$. It follows that the maps $\phi_i^{(\xi)}:=(I_\xi^{-1}\circ\phi_i\circ i_\xi):C_\xi\rightarrow A_\xi$ are defined over $K$. With the setup in place, we have the following lemma: 
\begin{lemma}{\label{Lemma1}} For almost all $\xi\in H^1(K,G)$, if $P_\xi\in C_\xi(K)$ and $\rank\big(A_\xi(K)\big)\leq1$, then either \begin{equation}{\label{lemma equation}}\phi_1^{(\xi)}(P_{\xi})+\phi_2^{(\xi)}(P_{\xi})\in A_\xi(K)_{\Tor}\;\;\;\; \text{or} \;\;\;\;\phi_1^{(\xi)}(P_{\xi})-\phi_2^{(\xi)}(P_{\xi})\in A_\xi(K)_{\Tor}.\end{equation}  
\end{lemma}
\begin{proof}[Proof of Lemma] If $P_\xi\in C_\xi(K)$ and $\rank\big(A_\xi(K)\big)\leq1$, then we may write $\phi_i^{(\xi)}(P_\xi)=n_i^{(\xi)}\cdot S_\xi+T_i^{(\xi)}$, where $S_\xi$ generates the free part of  $A_\xi(K)$ and the $T_i^{(\xi)}$ are torsion points. Let $P=i_\xi(P_\xi)$ and let $S$ and $T_i$ denote the images of $S_\xi$ and $T_i^{(\xi)}$ under $I_\xi$ respectively. In particular, we see that $\phi_i(P)=n_i^{(\xi)}S+T_i$.  

Fix an ample symmetric divisor $D\in\Div_K(A)$, an associated Weil height function $h_D$ \big(determined up to $O(1)$\big), and canonical height function $\hat{h}_D$. Moreover, we take $h_C$ to be the Weil height given by our fixed embedding of $C\subset\mathbb{P}^{n}$. Since $\lambda\in PGL_{n+1}$ is a linear map, we have that 
\begin{equation}{\label{height}} h_D\big(\phi_1(Q)\big)=h_D\big(\phi_2(Q)\big)+O(1),\;\;\;\text{for all}\;\;\;Q\in C(\bar{K}).\end{equation} To see this, we use the fact that the height function $h_D$ is given by some morphism $\pi:A\rightarrow\mathbb{P}^m$. Then we have rational maps $\mathbb{P}^n\rightarrow\mathbb{P}^m$, given by a tuple of some degree $d$ homogeneous polynomials that restrict to the morphisms $(\pi\circ\phi)$ and $(\pi\circ(\phi\circ\lambda))$ on $C\rightarrow\mathbb{P}^m$. It follows from \cite[Theorem B.2.5]{Silv-Hindry}, that $h_D(\phi_i(Q))=d\cdot h_C(Q)+O(1)$ for all $Q\in C(\bar{K})$, since $\lambda$ is a linear map (hence does not alter the degree). In particular, $h_D\big(\phi_1(Q)\big)-h_D\big(\phi_2(Q)\big)=d\cdot h_C(Q)-d\cdot h_C(Q)+O(1)$, and we have the desired relation in (\ref{height}). 

Since $\phi_i(P)=n_i^{(\xi)}S+T_i$, we compute that $\hat{h}_D\big(\phi_1(P)\big)-\hat{h}_D\big(\phi_2(P)\big)=\big[{\big(n_1^{(\xi)}\big)}^2-{\big(n_2^{(\xi)}\big)}^2\big]\cdot\hat{h}_D(S)$. However, it is well known that $\hat{h}_D=h_D+O(1)$. (See, for example \cite[Theorem. B.5.1]{Silv-Hindry}). In particular, 
\begin{equation}{\label{canonicalheight}}\Big\vert{\big(n_1^{(\xi)}\big)}^2-{\big(n_2^{(\xi)}\big)}^2\Big\vert\leq\frac{M(\phi,\lambda,C,A)}{\hat{h}_D(S)},\end{equation} 
by equation (\ref{height}). Here the constant $M$ depends on $\phi,\lambda$, $C$, and $A$ but not on $\xi$. Moreover, since $A[G]\subseteq A[N]$ and $S$ is not torsion, it follows from \cite[Theorem 6]{SilvDuke} that 
\begin{equation}{\label{lower bound}}\hat{h}(S)=\hat{h}_{I_\xi^*D}(S_\xi)\geq c_1\big(d(\xi),g\big)\cdot\log\big\vert\Norm_{K/\mathbb{Q}}\Disc K_C(\xi)/K\big\vert-c_2\big(d(\xi),g,A,D\big),\end{equation} where the constant $c_1>0$ depends on $d(\xi)=|K_C(\xi)/K|$ and $g=\dim(A)$, and $c_2$ depends on $g$, $d(\xi)$, $A$ and our ample symmetric divisor $D$. However, the degree $d(\xi)$ in fact only depends upon $G$; see \cite[Remark 1.]{Silv}. Hence, we may choose $c_1$, $c_2$ and $M$ all independent of $\xi$. Therefore, after combining (\ref{canonicalheight}) and (\ref{lower bound}), we deduce that $n_1^{(\xi)}=\pm{n_2^{(\xi)}}$ for all but finitely many $\xi\in H^1(K,G)$. In particular, either 
\begin{equation}{\label{Torsion}}
\phi_1^{(\xi)}(P_{\xi})+\phi_2^{(\xi)}(P_{\xi})=T_1^{(\xi)}+T_2^{(\xi)}\;\;\; \text{or}\;\;\; \phi_1^{(\xi)}(P_{\xi})-\phi_2^{(\xi)}(P_{\xi})=T_1^{(\xi)}-T_2^{(\xi)}. \end{equation} This completes the proof of Lemma \ref{Lemma1}.    
\end{proof}
We now return to the proof of Theorem \ref{thm:General}. Let $\mathcal{U}$ be the set of counterexamples to Theorem \ref{thm:General}, that is \[\mathcal{U}=\Big\{\xi\in H^1(K,G)\Big\vert\;\rank\big(A_\xi(K)\big)\leq1,\;\text{and there exists}\; P_\xi\in C_\xi(K)\setminus C_\xi[G]\Big\}.\] It is our aim to show that $\mathcal{U}$ is a finite set. For a contradiction, suppose that we can choose an infinite subset $\{\xi_1,\xi_2,\dots\}\subseteq\mathcal{U}$. In particular, we can assume that the sequence \[\Big\{\log\big\vert\Norm_{K/\mathbb{Q}}\Disc K_C(\xi_i)/K\big\vert\Big\}\rightarrow\infty.\] Let $P_{\xi_i}\in C_{\xi_i}(K)$ be the points guaranteed by the $\xi_i$'s membership in $\mathcal{U}$, and let $P_i=i_{\xi_i}(P_{\xi_i})$ be their images in $C(\bar{K})$. Note that since $P_i\notin C[G]$, guaranteed by our assumption on $P_{\xi_i}$, we have a lower bound \[h_C(P_i)\geq c_3\log\big\vert\Norm_{K/\mathbb{Q}}\Disc K_C(\xi_i)/K\big\vert-c_4;\] see \cite[Theorem 3]{SilvDuke}. Moreover, $c_3$ and $c_4$ may be chosen independently of $\xi_i$ by \cite[Remark 1.]{Silv}. In particular, the subset $\{P_1,P_2,\dots\}\subseteq C(\bar{K})$ must also be infinite. 

Now from Lemma \ref{Lemma1}, after possibly discarding finitely many of the $\xi_i$'s, we see that either \[\phi_1^{(\xi_i)}(P_{\xi_i})+\phi_2^{(\xi_i)}(P_{\xi_i})\in A_{\xi_i}(K)_{\Tor}\;\; \text{or} \;\;\phi_1^{(\xi_i)}(P_{\xi_i})-\phi_2^{(\xi_i)}(P_{\xi_i})\in A_{\xi_i}(K)_{\Tor}.\] However, for all but finitely many $\xi_i$, if $T_{\xi_i}\in {(A_{\xi_i})}_{\Tor}$, then $T_{\xi_i}\in A_{\xi_i}[G]$. (Again, see \cite[Theorem 6]{SilvDuke}). But by assumption $1$, this means that $T_{\xi_i}$ is an $N$-torsion point. After applying $I_{\xi_i}$ to the equations in (\ref{lemma equation}), we get that \[\phi_1(P_i)+\phi_2(P_i)\in A[N]\;\;\; or \;\;\;\phi_1(P_i)-\phi_2(P_i)\in A[N].\] 
Since $A[N]\cong(\mathbb{Z}/N\mathbb{Z})^{2g}$ is a finite set, the pigeonhole principle implies that for some $T\in A[n]$, there are infinitely many $i$ such that either $\phi_1(P_i)+\phi_2(P_i)=T$ or $\phi_1(P_i)-\phi_2(P_i)=T$. However, since $C$ is a curve, we see that either $\phi_1+\phi_2$ or $\phi_1-\phi_2$ is a constant function. This contradicts assumption $4$, and we deduce that $\mathcal{U}$ is finite.            
\end{proof}
\begin{remark} One can easily apply Theorem \ref{thm:General} to the symmetric quartics defined on (\ref{Quartic}), or to the Fermat sextics $x^6+y^6=c$, which were also studied by Dem'janenko in \cite{Dem}. In the quartic case, one checks that the maps $\phi_1\pm\phi_2$ take the points at infinity on $F$ to the point at infinity $\mathcal{O}_E$ on $E$. Hence, if we take $\mathcal{O}_E$ to be the identity on $E$ and if one of the maps $\phi_1\pm\phi_2$ is constant, then $\phi_1=\pm{\phi_2}$. In either case, this equality forces $x=\pm{y}$, which is false for all but finitely many points on $F$. 

Alternatively, using the addition law on $E=E_{(a,b)}$, we see that $(x/z)\circ(\phi_1+\phi_2):F\rightarrow\mathbb{P}^1$ is given by \[(x/z)\circ(\phi_1+\phi_2)([x,y,z])=\frac{(2xy)^2+(2x+2y)^2(x^2+y^2)+4az^2(x^2+xy+y^2)+a^2z^4}{(x+y)^2z^2}.\] 
Since the map $x/z: E\rightarrow\mathbb{P}^1$ has degree $2$, it follows that \[deg(\phi_1)=deg(\phi_2)=4\;\;\;\text{and}\;\;\;\; deg(\phi_1+\phi_2)=8.\] \
If we let $D=(\mathcal{O}_E)\in Div(E)$, then we define a pairing $\langle\;,\; \rangle_{\text{\tiny D}}$ on $\Hom(F_{(a,b)},E_{(a,b)})$ given by 
\begin{equation}{\label{degree pairing}}\langle\;\phi,\psi \rangle_{\text{\tiny D}}=\frac{1}{2}\Big(deg(\phi+\psi)^*D-deg\phi^*D-deg\psi^*D\Big).\end{equation}
 We calculate that \[\langle\phi_1,\phi_1\rangle_{\text{\tiny D}}=4=\langle\phi_2,\phi_2\rangle_{\text{\tiny D}}\;\;\;\;\text{and}\;\;\;\; \langle\phi_1,\phi_2\rangle_{\text{\tiny D}}=\frac{1}{2}(8-4-4)=0\] It is well known that this pairing is nondegenerate, from which it follows that $\phi_1$ and $\phi_2$ are independent elements of $\Hom(F_{(a,b)},E_{(a,b)})$. From here, the conclusion of Theorem \ref{thm:General} can be achieved by \cite[Theorem 1]{Silv} in this case.      
\end{remark} 
Of course, the group law and rank computations are more difficult for abelian varieties of dimension greater than $1$. However, if $A$ is the Jacobian of some hyperelliptic curve, then bounding the rank of $A$, at least in principle, is possible \cite{descent}. With this in mind, we have the following corollary, which generalizes the work of Dem'janenko for the Fermat quartics. As a reminder, for odd integers $m\geq3$, we define 
\begin{equation}{\label{symmetric}} C=C_{(a,b)}: X^{2m}+aX^m+aY^m+Y^{2m}=b,\;\;\;\text{and}\;\ B=B_{(a,b)}:y^2=-x^{2m}-ax^{m}+(a^2/4+b).\end{equation}
Similarly, set $C_\alpha=C_{(\alpha\cdot a,\alpha^2\cdot b)}$ and $B_\alpha=B_{(\alpha\cdot a,\alpha^2\cdot b)}$ for all $\alpha\in K^*/(K^*)^m$. We identify the Jacobian $J$ of $B$ with $\Pic^0(B)$, and use $[\;\cdot\;]$ to denote a divisor class modulo linear equivalence. Note that we have maps $\phi_1,\phi_2:C\rightarrow J$, given by \[\phi_1(x,y)=\big[2\cdot\big(x\;,\;y^m+a/2\big)-(\infty_++\infty_-)\big],\;\;\text{and}\;\;\;\phi_2(x,y)=\big[2\cdot\big(y\;,\;x^m+a/2\big)-(\infty_++\infty_-)\big].\] 

We will show that $C$ and $J$ satisfy the hypotheses of Theorem \ref{thm:General}. In particular, let $G=\mu_m$ be the $m$-th roots of unity in $\bar{K}$, and note that $G$ acts on both $C$ and $J$. Specifically, for $\zeta\in G$ we have the action $\zeta\cdot(x,y)=(\zeta\cdot x,\zeta\cdot y)$ on $C$ and the action $\zeta\cdot(x,y)=(\zeta\cdot x,y)$ on $B$. Moreover, the action of $G$ on $B$ induces an action on $J$. Similarly, let $\lambda:\mathbb{P}^2\rightarrow\mathbb{P}^2$ be the map $[x,y,z]\rightarrow [y,x,z]$, and note that $\lambda(C)\subseteq C$ and that $\phi_2=(\phi_1\circ\lambda|_{C})$ on $C$. 

Finally, from Kummer theory, we have that \[H^1(K,G)=H^1\big(\Gal(\bar{K}/K),\mu_m\big)\cong K^*/(K^*)^m,\] and if $\xi\in H^1(K,G)$ corresponds to $\alpha\in K^*/(K^*)^m$, then the twists of $C$ and $J$ are precisely $C_\alpha$ and $J_\alpha=\Jac(B_\alpha)$. With this in place, we restate and prove the following result.      
\Symm*
\begin{proof} We need only verify the conditions of Theorem \ref{thm:General}. Conditions $2$ and $3$ are straightforward, and one checks that $J[G]\subseteq J[m]$ for condition $1$. To verify condition $4$, suppose that $\phi_1+\phi_2$ is a constant map from $C\rightarrow A$. Then by evaluating this map on the points at infinity of $C$, we see that $\phi_1+\phi_2=\mathcal{O}_J$, the trivial divisor class. Hence, $\phi_1(P)=-\phi_2(P)$ for all $P\in C(\bar{K})$. We multiply this equation by $(m-1)/2$. In particular, for all $P=(x,y)\in C(\bar{K})$, we have that \begin{equation}{\label{div}}\Big[(m-1)\cdot\big(x,y^m+a/2\big)+\Big(\frac{m-1}{2}\Big)\cdot(\infty_++\infty_-)\Big]=\Big[(m-1)\cdot\big(y,-(x^m+a/2)\big)+\Big(\frac{m-1}{2}\Big)\cdot(\infty_++\infty_-)\Big].\;\;\end{equation} \indent It follows from \cite[Prop.1]{hyperelliptic} that if $(y^m+a/2)\cdot(x^m+a/2)\neq0$, then $(x,y^m+a/2)=\big(y,-(x^m+a/2)\big)$. This conclusion holds because the divisors on (\ref{div}) are effective and reduced, (effective since the genus of $B$ is $m-1$ and reduced since we miss the Weierstrass points). Hence, if $\phi_1+\phi_2$ is a constant function, then $x=y$ (at least when $(y^m+a/2)\cdot(x^m+a/2)\neq0$). But $x\neq y$ for all but finitely many points on $C$. We conclude that $(\phi_1+\phi_2):C\rightarrow J$ is indeed non-constant.

On the other hand, suppose that $\phi_1-\phi_2$ is a constant map. As before, we apply our map to the points at infinity on $C$. Half of these points map to $2\cdot[\infty_+-\infty_-]$ and the other half map to $2\cdot[\infty_--\infty_+]$. Since we are assuming that $\phi_1-\phi_2$ is constant, it must be the case that $4\cdot[\infty_+-\infty_-]=0$. However, the divisor $[\infty_+-\infty_-]$ is already an $m$-torsion point. In fact, the divisor of the function $f=\Big(y - \sqrt{-1}\cdot\big(x^m + -a/2z^m\big)\Big)/z^m$ gives the desired relation. Since $m$ is odd, it follows that $[\infty_+-\infty_-]=\mathcal{O}_J$ is the trivial class. Therefore, $\phi_1(P)-\phi_2(P)=2\cdot[\infty_+-\infty_-]=\mathcal{O}_J$ for all $P\in C(\bar{K})$. We now repeat the argument (without the minus signs) as in the $\phi_1+\phi_2=\mathcal{O}_J$ case to show that this is impossible for all $P$. In particular, we have verified the hypotheses of Theorem \ref{thm:General}.  

The description of the possible $K$-points follows from Theorem \ref{thm:General} and the fact that any point on $C_\alpha$ which is fixed by some nontrivial root of unity must be at infinity ($z$-coordinate zero in $\mathbb{P}^2$).    
\end{proof}
\begin{remark}We illustrate the difficulty one has in computing the degree pairing defined on (\ref{degree pairing}) when $m=3$. Recall that $J$ is the Jacobian of the genus two curve $B:y^2=-x^6-ax^3+(a^2/4+b)$. It is known that $J$ is isogenous to $E\times E$, for some elliptic curve $E$ defined over an algebraic closure; see \cite[Prop 4.2]{genus2}. Hence, one can try pushing our maps forward to $E$, where the group law is explicit, and mimic the degree calculation as in the quartic case. Specifically, if $c=a^2/4+b\neq0$ and $d=a\cdot i/\sqrt{c}$, then we have the elliptic curve \[E: Y^2=X^3-(3d-30)X^2+(3d^2+36d+60)X-(d-2)(d+2)^2,\] and a map \[\pi:C\rightarrow E,\;\;\;\text{given by}\;\;\; \pi(x,y)=\left(\;\frac{\left(\frac{x}{\sqrt[6]{c}\cdot i}+1\right)^2}{\left(\frac{x}{\sqrt[6]{c}\cdot i}-1\right)^2}\cdot(d+2)\;,\;\frac{8y\cdot(d+2)}{\sqrt{c}\cdot\left(\frac{x}{\sqrt[6]{c}\cdot i}-1\right)^3}\;\right).\] However, for studying dependence, there is no harm in passing to an extension field. Let $\tilde{\pi}_i:C\rightarrow E$ be the map factoring through $\phi_i$ and the isogeny $J\sim E\times E$. One checks that $2\pi_i+T=\tilde{\pi_i}$ for some $3$-torsion point $T$ on $E$. Hence, it suffices to compute the degrees of $\pi_1, \pi_2$, and $\pi_1+\pi_2$ to show the independence of $\phi_1$ and $\phi_2$. However, even if one chooses $a$ and $b$ such that $E, \pi_1$ and $\pi_2$ are defined over $\mathbb{Q}$, Magma could not compute $\deg(\pi_1+\pi_2)$ given $12$ hours with a personal computer.      
\end{remark} 
We further illustrate Theorem \ref{thm:General} with an interesting family of twists in which every curve corresponding to a single positive, odd prime has positive rank (assuming the standard parity conjecture). Hence, using Dem'janenko's method seems indispensable if one wishes to catalogue the rational points in this particular family. Before we restate Theorem \ref{thm:Hasse}, we remind the reader  of the parity conjecture. For a more comprehensive presentation, as well as the relevant definitions, see \cite[\S1]{Parity}. 
\begin{conjecture}{\textbf{(Parity Conjecture)}}{\label{parity conjecture}} \; Let $E$ be an elliptic curve defined over the rational numbers, let $L(E/\mathbb{Q},s)$ be the $L$-function of $E$, and let $W(E/\mathbb{Q})=\pm{1}$ be the global root number of $E$. Then \[(-1)^{\rank\left(E(\mathbb{Q})\right)}=W(E/\mathbb{Q}).\] 
\end{conjecture}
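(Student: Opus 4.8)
\emph{Proof strategy.} The plan is to reduce the stated parity $(-1)^{\rank(E(\mathbb{Q}))} = W(E/\mathbb{Q})$ to the functional equation of $L(E/\mathbb{Q},s)$ together with the rank statement of the Birch--Swinnerton-Dyer conjecture. First I would invoke the modularity of $E$ (Wiles, Taylor--Wiles, Breuil--Conrad--Diamond--Taylor) so that $L(E/\mathbb{Q},s)$ extends to an entire function satisfying
\[
\Lambda(E,s) = W(E/\mathbb{Q})\cdot\Lambda(E,2-s), \qquad \Lambda(E,s) := N^{s/2}(2\pi)^{-s}\Gamma(s)\,L(E/\mathbb{Q},s),
\]
where $N$ is the conductor of $E$. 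Evaluating this symmetry about the central point $s=1$ pins down the parity of the order of vanishing there: a sign $W(E/\mathbb{Q})=-1$ makes $\Lambda$ odd about $s=1$, forcing $\Lambda(E,1)=0$ and odd vanishing order, while $W(E/\mathbb{Q})=+1$ forces even order. In either case $(-1)^{\operatorname{ord}_{s=1}L(E/\mathbb{Q},s)} = W(E/\mathbb{Q})$.

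The second step is to replace the analytic rank $\operatorname{ord}_{s=1}L(E/\mathbb{Q},s)$ by the Mordell--Weil rank $\rank(E(\mathbb{Q}))$, which is exactly the rank portion of Birch--Swinnerton-Dyer; granting it, the two steps combine at once to give the displayed equality. The hard part is precisely this second step, and it is the only genuine obstacle: the rank formula of Birch--Swinnerton-Dyer is open in general, so this route yields no unconditional proof.

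The best unconditional approximation I would pursue instead works one prime $p$ at a time. Replace $\rank(E(\mathbb{Q}))$ by the $\mathbb{Z}_p$-corank $s_p$ of the $p^\infty$-Selmer group $\Sel(E/\mathbb{Q})[p^\infty]$, and prove the $p$-parity statement $(-1)^{s_p}=W(E/\mathbb{Q})$ directly by matching the global root number to the behavior of Selmer coranks in the relevant Galois representations; this is a theorem over $\mathbb{Q}$ (Nekov\'a\v{r}; T.~and V.~Dokchitser), again built on modularity. To descend from $s_p$ back to $\rank(E(\mathbb{Q}))$ in the stated equality one uses the exact sequence relating Selmer to Mordell--Weil and the Tate--Shafarevich group, which gives $s_p=\rank(E(\mathbb{Q}))$ provided the $p$-primary part of the Tate--Shafarevich group is finite. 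It is exactly this finiteness that remains unknown, so the parity conjecture as stated is open; this is why the paper adopts it as a hypothesis rather than deriving it.
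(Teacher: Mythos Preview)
The statement under review is a \emph{conjecture}, and the paper does not attempt to prove it: immediately after stating it, the paper writes ``Although the parity conjecture remains an open problem, we can use it\ldots'' and then invokes it only as a hypothesis in part~5 of Theorem~\ref{thm:Hasse}. So there is no ``paper's own proof'' to compare against.

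Your proposal is not a proof either, and you say so yourself: you correctly trace the obstruction to the rank part of BSD (or, in the Selmer formulation, to finiteness of $\Sha[p^\infty]$), and you correctly note that the $p$-parity theorem of Nekov\'a\v{r} and the Dokchitsers gets you as far as $(-1)^{s_p}=W(E/\mathbb{Q})$ unconditionally over $\mathbb{Q}$. This is an accurate summary of the state of the art and of why the paper treats the statement as a standing hypothesis rather than a result. There is nothing to correct; just be aware that what you have written is an explanation of why the conjecture is open, not a proof attempt, and that this is exactly how the paper uses it.
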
 
Although the parity conjecture remains an open problem, we can use it, in combination with Theorem \ref{thm:General}, to predict the existence of many curves of genus $3$ which break the Hasse local-global principle. 
\Hasse*
\begin{proof} Note that $p>0$ implies that $F^{(p)}(\mathbb{R})$ is nonempty, since $F^{(p)}\cong F$ over $\mathbb{R}$, and $(1,1)$ is a point in $F(\mathbb{R})$. For the rest of the proof of statement $1$, we first show that $F^{(p)}$ has $\mathbb{Q}_q$-points for $q=2,3,p$ (the primes of bad reduction). Suppose that $p\equiv1\bmod{24}$, so that in particular $p\equiv1\bmod{8}$ and $p\equiv1\bmod{3}$. Hence, $p=\theta^2$ for some $\theta\in\mathbb{Z}_2$. One checks that $(\theta,\theta)\in F^{(p)}(\mathbb{Q}_2)$. Similarly, we can write $p=\omega^2$ for some $\omega\in\mathbb{Z}_3$. It follows that $(\omega,\omega)\in F^{(p)}(\mathbb{Q}_3)$. On the other hand, $p\equiv1\bmod{8}$ implies that we can choose a primitive $8$-th root of unity $\zeta$ in $\mathbb{Q}_p$. Hence, $[\zeta,1,0]\in F^{(p)}(\mathbb{Q}_p)$ is a rational point at infinity. 

Now for $q\notin\{2,3,p\}$, a simple argument with Hensel's lemma shows that $F^{(p)}(\mathbb{Q}_q)\neq\varnothing$ if and only if $F^{(p)}(\mathbb{F}_q)\neq\varnothing$. However, since $F^{(p)}$ has genus $3$, it follows from the Weil bound that $F^{(p)}(\mathbb{F}_q)\neq\varnothing$ for all $q\geq37$. For the remaining small primes, we check manually (with Magma \cite{Magma}) that $F^{(p)}(\mathbb{F}_q)$ has points for all possible congruence classes of $p$.

For part $2$, if $p>0$ is an odd prime, then it suffices to compute $W_2$ and $W_p$, the local root numbers at $2$ and $p$ respectively. Since $E$ has complex multiplication, $E^{(p)}$ has potential good reduction at every prime. Moreover, Tate's algorithm shows that $E^{(p)}$ has Kodaira type $I_0^*$ at $p$, and \cite[Theorem 1.1]{rootnumbers} implies that $W_p=\left(\frac{-1}{p}\right)$. On the other hand, one checks that $E^{(p)}$ has Kodaira type $III$ at $2$, that $c_4(E_p)=2^5\cdot5\cdotp^2$ and that $c_6(E_p)=2^8\cdot7\cdot p^3$. It follows from Halberstadt's table that $W_2=1$ if and only if $6p+5\equiv\pm{1}\bmod8$. We conclude that $W_2=-1$ just in case $p\equiv3\bmod{4}$. Hence, the global root number $W\left(E^{(p)}\right)=W_2\cdot W_p=-1$ as claimed.       

As for the proof of statement $3$, we will utilize the fact that the maps $\phi_1,\phi_2:F\rightarrow E$ are of the same degree, adapting the proof of Theorem \ref{thm:General}, in order to make the bounds explicit. This method is also discussed in \cite[\S13.3.1]{Coh}. 

Suppose that $P_{\alpha}=(x_0,y_0)\in F^{(\alpha)}(\mathbb{Q})$ and that $\rank\left(E^{(\alpha)}(\mathbb{Q})\right)\leq1$. Therefore, we can write $\phi_i(P_{\alpha})=n_i\cdot G_{\alpha}+T_i$, where $G_{\alpha}$ generates the free part of $E^{(\alpha)}(\mathbb{Q})$, if it exists, and some torsion points $T_i$. Furthermore, let $G$ be the image of $G_{\alpha}$ in $E$ under the map $(x,y)\rightarrow\left(\frac{x}{\alpha},\frac{y}{\sqrt{\alpha}^3}\right)$. 

Since $E$ has CM by $\mathbb{Z}[\sqrt{-2}]$, it follows that $E^{(\alpha)}$ is supersingular at all but finitely many primes $p\equiv5,7\bmod{8}$ (the inert primes). In particular, \[\big\vert E^{(\alpha)}(\mathbb{Q})_{\text{Tor}}\big|\;\;\text{divides}\; \gcd\Big(\big\{p+1\;\big\vert\; \text{almost all primes}\; p\equiv5,7\bmod{8}\big\}\Big).\] Dirchlet's theorem on arithmetic progressions implies that the above greatest common divisor is $2$. (See \cite[Theorem 1.1]{me} for details.) In particular, each torsion point $T_i$ is either the point at infinity or $(0,0)$, and so there is no need to distinguish between the rational torsion points of $E$ and $E^{(\alpha)}$.  

Now, we see that $h(\phi_i(Q))=2h(Q)+O(1)$ for all $Q\in F(\bar{\mathbb{Q}})$. (Again, see \cite[Theorem B.2.5]{Silv-Hindry}, where $F$ inherits the Weil height from its embedding in projective space.) In fact, we can make this bound explicit since the Nullstellensatz step of the proof of the lower bound is easy for our curve; see Proposition \ref{height prop} in the Appendix \ref{Appendix}. Hence, it follows from the defining equation of $F$ that: \[\Big|h\big(\phi_1(Q)\big)-h\big(\phi_2(Q)\big)\Big|\leq 8\log(2)+3\log(3)\leq8.842.\] On the other hand, we use Sage \cite{Sage} to compute a bound between the canonical and Weil heights on $E$, namely: \[-6.214\leq\hat{h}(W)-h(W)\leq 5.659\;\;\; \text{for all}\;\;\; W\in E(\bar{\mathbb{Q}}).\] Finally, for all square free twists $\alpha$, we can compute a lower bound (depending on $\alpha$) on the canonical height of any non-torsion point of $E^{(\alpha)}(\mathbb{Q})$. This has been made explicit in \cite[Theorem 1]{twistbound}, and we conclude that 
\begin{equation}{\label{twisted height}}\hat{h}(W)\geq\frac{1}{8}\log|\alpha|-0.8232, \;\;\;\text{for all}\;\;\; W\in E^{(\alpha)}\left(\mathbb{Q}\right)\setminus E^{(\alpha)}[2]. \end{equation}      
We are now ready to apply these bounds to the theorem. Note that $P=\left(\frac{x_0}{\sqrt{\alpha}}, \frac{y_0}{\sqrt{\alpha}}\right)\in F(\bar{\mathbb{Q}})$, and because $\phi_1$ and $\phi_2$ commute with the action of $\mathbb{Z}/2\mathbb{Z}$, we see that $\phi_i(P)=n_i\cdot G+T_i$. In particular, we may apply the above bounds to the case where $Q=P$ and $W=\phi_i(P)$. We get that \[|n_1^2-n_2^2|\leq\frac{5.659+8.842+6.214}{\hat{h}(G)};\] see \cite[\S13.3.1]{Coh}. However, if $\alpha>7\cdot10^{74}$, then $\hat{h}(G)>20.715$ by (\ref{twisted height}). This forces $|n_1^2-n_2^2|=0$, and hence $n_1=\pm{n_2}$. We can conclude that 
\begin{equation}{\label{dependence relation}} \phi_1(P_{\alpha})\pm\phi_2(P_{\alpha})\in E^{(\alpha)}\left(\mathbb{Q}\right)_{\text{Tor}}\subset E^{(\alpha)}[2].\end{equation} 
 In particular, $\phi_1(P_{\alpha})\pm\phi_2(P_{\alpha})\in\{\mathcal{O},(0,0)\}$. If $\phi_1(P_{\alpha})=\pm\phi_2(P_{\alpha})$, then $x=y$. This implies that $x^4-4\alpha x^2+3\alpha^2$ has a rational root. However, since $\alpha$ is square free, this forces $\alpha=3$. On the other hand, if $\phi_1(P_{\alpha})\pm\phi_2(P_{\alpha})=(0,0)$, then the group law on $E^{(\alpha)}$ implies that $-x^2y^2=-2\alpha^2$. Again this is impossible, since $\sqrt{2}$ is irrational. Hence, we deduce that $F^{(\alpha)}\left(\mathbb{Q}\right)=\varnothing$.  

As for the proof of statement $4$, we use the fact that $E$ has a rational $2$-torsion point and preform a $2$-descent. Suppose that $\rho:E^{(p)}\rightarrow A^{(p)}$ is the standard isogeny of degree $2$ with kernel $\{\mathcal{O},(0,0)\}$; see for instance \cite[10.4.9]{SilvElliptic}. Let \[C_{d}: dw^2=d^2-8pdz^2+8p^2z^4.\] Then the $\rho$-Selmer group consists of those $d\in\{\pm{1},\pm{2},\pm{p},\pm{2p}\}$ for which $C_d$ has points everywhere locally. Note that if $d<0$, then $C_d(\mathbb{R})=\varnothing$. Consider the case when $d=p$ and suppose that $(w,z)\in C_p(\mathbb{Q}_p)$. We see that in this case $w^2=p-8pz^2+8pz^4$. Hence the valuation (denoted $v_p$) of the left hand side of the equation is even. On the other hand, the valuation on the right hand side is $1+v_p(1-8z^2+8z^4)$. Thus $v_p(1-8z^2+8z^4)$ must be odd, in particular non-zero. If $v_p(z)\neq0$, then $v_p(1-8z^2+8z^4)=0$. Hence, we have that $v_p(z)=0$. However, $2v_p(w)\geq\min\{v_p(p),v_p(8pz^2),v_p(8pz^4)\}=1$, so that $(w,z)\in C_p(\mathbb{Z}_p)$ has integral coordinates. By reducing $\bmod{(p)}$, we see that the polynomial $f(x)=x^4-4x^2+2$ has a root in $\mathbb{F}_p$. This implies that $p\equiv\pm{1}\bmod{16}$; see \cite{quartic}. So, if $p\notequiv\pm{1}\bmod{16}$, then the $\rho$-Selmer group has $2$-rank less than or equal to one. It follows that $\rank\left(E^{(p)}(\mathbb{Q})\right)\leq2$ for such $p$; see, for instance, [Equation (5)]\cite{maximalrank}. 

Piecing everything together, if $p\equiv25\bmod{48}$, then $F^{(p)}$ is everywhere locally solvable and $\rank\left(E^{(p)}(\mathbb{Q})\right)\leq2$ by parts $1$ and $4$ of Theorem \ref{thm:Hasse}, respectively. Moreover, the Parity Conjecture \ref{parity conjecture} and part $2$ of Theorem \ref{thm:Hasse} imply that the rank of $E^{(p)}$ is odd. Hence $\rank\left(E^{(p)}(\mathbb{Q})\right)=1$. Finally, if $p>7\cdot10^{74}$ and $\rank\left(E^{(p)}(\mathbb{Q})\right)=1$, then $F^{(p)}$ has no rational solutions by part $3$. In particular, Dirichlet's theorem on arithmetic progressions implies that there are infinitely many primes $p$ for which $F^{(p)}$ breaks the Hasse principle.                     
\end{proof}
Of course, in practice part $3$ of Theorem \ref{thm:Hasse} holds for much smaller integer values of $\alpha$. For instance, we checked manually all values $1\leq\alpha\leq500$, to see whether $\rank\left(E^{(\alpha)}(\mathbb{Q})\right)\leq1$ implies that $F^{(\alpha)}(\mathbb{Q})=\varnothing$. Outside of the small exception $\alpha=3$, the conclusion held. This begs the question as to whether the result holds for all square-free $\alpha\geq3$.

The first case in which we are not able to conclude anything about the set of rationals points is when $p=577$. One can show that $\rank\left(E^{(577)}(\mathbb{Q})\right)=3$. Hence, Dem'janenko's method does not apply. Morever $577\equiv1\bmod{24}$, so that $F^{(577)}$ has points everywhere locally. To make matters more difficult, the rank of the Jacobian of $F^{(577)}$ is at least $6$, from which it follows that Chabauty's method is not directly applicable. We ask whether $F^{(577)}(\mathbb{Q})$ is indeed empty, perhaps provable by some combination of covering collections with Chabauty.
\begin{remark}Similarly, when $(a,b)=\left(\frac{-21}{4},\frac{-889}{16}\right)$, then $F$ covers an elliptic curve $E$ with CM by an order in $\mathbb{Q}(\sqrt{-7})$. Recently Coates, Li, Tian, and Zhai determined the rank of many twists of $E$ and verified the BSD conjecture in these cases \cite{Coates}. Unfortunately, $F^{(\alpha)}(\mathbb{R})=\emptyset$ for all $\alpha$, and so there is no point in using Theorem \ref{thm:General} to determine its rational points. However, there may be some use in studying $F^{(\alpha)}(K)$ over quadratic extensions $K/\mathbb{Q}$, using the results in \cite{Coates} and the techniques discussed in this note. We do not take up that problem here.          
\end{remark} 
We now discuss some motivation coming from orbit intersection problems in quadratic dynamics.    
\begin{section}{3. Shifted Orbit Equations}{\label{Orbit}}

\indent \indent An interesting problem in dynamics is to understand how orbits intersect after applying a linear (or more complicated) shift. To make this precise, we fix some notation. Let $K$ be a field and let $a,b,\alpha$ and $\beta$ be elements of $K$. For a rational map $f\in K(x)$, we define \[\mathcal{O}_{f,n}(\alpha)=\{f^{n}(\alpha),f^{n+1}(\alpha),\dots\}.\]Stated more precisely, our problem is to understand the set $L\big(\mathcal{O}_{f,n}(\alpha)\big)\cap\mathcal{O}_{f,n}(\beta)$. In particular, we would like to study the size of this intersection. Note that this is equivalent to studying the rational solutions to an equation of the form $f^{n}(y)-af^n(x)=b$. Curves defined in this way, along with encoding dynamical information, can give rise to some interesting arithmetic. 

For example, suppose that $K=\mathbb{Q}$, $f(x)=x^2-2, L(x)=1-x$ and $n=2$. Note that $f$ has the five rational preperiodic points $\{0,\pm{1},\pm{2}\}$, often written $\PrePer(f,\mathbb{Q})=\{0,\pm{1},\pm{2}\}$, and that \[\mathcal{O}_{f,2}(0)=\mathcal{O}_{f,2}(-2)=\mathcal{O}_{f,2}(2)=\{2\}=L\big(\mathcal{O}_{f,2}(-1)\big)=L\big(\mathcal{O}_{f,2}(1)\big).\] Stated geometrically, for $X_{2^m}:=\big\{(x,y)\big\vert\; f^m(x)+f^m(y)=1\big\}$, we deduce the following curious fact:
\begin{equation}{\label{ChebychevPoints}} \{(0,\pm{1}),(\pm{2},\pm{1}),(\pm{1},0),(\pm{1},\pm{2})\}\subseteq X_{2^m}(\mathbb{Q}). 
\end{equation} 
\indent In particular, we have constructed curves of arbitrarily large genus having at least $12$ rational points. Note that since $X_{2^n}\rightarrow X_{4}$, via $(x,y)\rightarrow \big(f^{n-2}(x),f^{n-2}(y)\big)$, and that $X_4(\mathbb{Q})$ is finite (genus $3$), our dynamical problem of bounding the size of the intersection of these shifted orbits amounts to computing $X_4(\mathbb{Q})$. Namely, is our list on (\ref{ChebychevPoints}) exhaustive? 

However, after rearranging terms, $X_4$ becomes $F_{(-4,3)}:x^4-4x^2+y^4-4y^2=-3$, and one computes that $E_{(-4,3)}$ has rank one. Hence, we may use Dem'janenko's method to describe $X_4(\mathbb{Q})$ explicitly. 

\begin{remark}Note that the symmetric quartics $F_{(a,b)}$ will appear in this dynamical problem (after a change of variables) whenever $f$ is quadratic polynomial and $L$ is of the form $L(x)=b-x$. For a general linear transformation $L$, the equation $f^2(y)=L\big(f^2(x)\big)$ will map to two (distinct) elliptic curves, and so Dem'janenko's method will not necessarily apply. 
\end{remark}  
In this example, $f=x^2-2$ is the Chebychev polynomial of degree $2$, and by generalizing $X_{2^n}$ to incorporate all Chebychev polynomials, we obtain new curves with similar properties. Here we consider the \emph{Chebyshev polynomial} $T_d$ as characterized by the equation \[T_d(z+z^{-1})=z^d+z^{-d}\;\;\; \text{for all}\;\; z\in\mathbb{C}^*.\] Furthermore, $T_d$ is known to be a degree $d$ monic polynomial with integer coefficients. The classical Chebyshev polynomials $\tilde{T}_d$ were defined in the following way: \[\text{If we write}\;\; z=e^{it},\;\;\text{then}\;\;\; \tilde{T}_d(2\cos(t))=2\cos(dt),\]  though we will use the first characterization where $T_d$ is monic. For a more complete discussion of these polynomials, see \cite[\S6.2]{SilvDyn}. 

With these definitions in place, we define the \emph{Chebychev Curve of degree d} to be
\begin{equation}{\label{ChebychevCurve}} X_d:=\big\{(x,y)\vert\; T_d(x)+T_d(y)=1\big\}. 
\end{equation} 

It is shown in Proposition \ref{nonsingularity} of the Appendix that $X_d$ is nonsingular. Note that $X_n\rightarrow X_{d}$ whenever $d|n$, as is the case for the Fermat equation $x^n+y^n=1$. This follows from the well known fact that $T_{n\cdot m}(x)=T_n\big(T_m(x)\big)$, called the \emph{nesting property}. (Again, see \cite[\S6.2]{SilvDyn}). 

\begin{remark}We will only consider rational points on the affine models defined in (\ref{ChebychevCurve}) above. The reason is that the rational points at infinity on the projective closure of $X_d$ in $\mathbb{P}^2$ are easy to describe: If $d$ is odd, then $[1,-1,0]$ is the only rational point at infinity, and if $d$ is even, then there are no points. 
\end{remark} 
As we will see, $X_d(\mathbb{Q})$ has several rational points for most values of $d$, making it an interesting family in that sense. As a first step, we have the following result.  
\Chebychev*
To prove Theorem \ref{thm:Chebychev}, we are in need of the following elementary lemma: 
\begin{lemma}{\label{Chebychev Lemma}} Suppose that $d\notequiv0\bmod{3}$. Then $T_d$ takes the following values: 
\begin{enumerate}
\item If $d$ is odd, then $T_d$ fixes every element of $\{0,\pm{1},\pm{2}\}$. 
\item If $d\equiv2\bmod{4}$, then $T_d(0)=-2, T_d(\pm{2})=2$, and $T_d(\pm{1})=-1$. 
\item If $d\equiv0\bmod{4}$, then $T_d(0)=T_d(\pm{2})=2$ and $T_d(\pm{1})=-1$.    
\end{enumerate} 
\end{lemma}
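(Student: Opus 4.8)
The plan is to use the defining identity $T_d(z+z^{-1}) = z^d + z^{-d}$ directly, evaluating $T_d$ at each of the five special arguments by choosing a suitable $z$ on the unit circle. For a fixed value $x_0 \in \{0,\pm 1,\pm 2\}$ one solves $z + z^{-1} = x_0$; the resulting $z$ is always a root of unity, so that $T_d(x_0) = z^d + z^{-d}$ becomes a power sum that is periodic in $d$. Concretely, $x_0 = 2$ gives $z = 1$; $x_0 = -2$ gives $z = -1$; $x_0 = 0$ gives $z = i$ (a primitive fourth root of unity); $x_0 = 1$ gives a primitive sixth root of unity; and $x_0 = -1$ gives a primitive cube root of unity.

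First I would dispose of the arguments $\pm 2$ and $0$, which require no hypothesis on $d \bmod 3$. For $x_0 = 2$ one gets $T_d(2) = 1^d + 1^{-d} = 2$ for every $d$. For $x_0 = -2$, $T_d(-2) = (-1)^d + (-1)^{-d} = 2(-1)^d$, which equals $2$ when $d$ is even and $-2$ when $d$ is odd, already yielding the stated values of $T_d(\pm 2)$ in all three cases. For $x_0 = 0$, $T_d(0) = i^d + (-i)^d$, which vanishes when $d$ is odd, equals $2$ when $d \equiv 0 \bmod 4$, and equals $-2$ when $d \equiv 2 \bmod 4$, matching the claim.

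Next I would treat $x_0 = \pm 1$, which is where the hypothesis $d \notequiv 0 \bmod 3$ enters. Writing $\zeta_6$ for a primitive sixth root of unity, we have $T_d(1) = \zeta_6^d + \zeta_6^{-d} = 2\cos(\pi d/3)$, a quantity depending only on $d \bmod 6$; evaluating, $T_d(1) = 1$ when $d \equiv \pm 1 \bmod 6$, equals $-1$ when $d \equiv \pm 2 \bmod 6$, and would take the value $\pm 2$ only in the excluded classes $d \equiv 0,3 \bmod 6$. Since $d \notequiv 0 \bmod 3$ forces $d \equiv \pm 1$ or $\pm 2 \bmod 6$, we obtain $T_d(1) = 1$ for $d$ odd and $T_d(1) = -1$ for $d$ even. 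Similarly, with $\zeta_3$ a primitive cube root of unity, $T_d(-1) = \zeta_3^d + \zeta_3^{-d} = 2\cos(2\pi d/3)$ depends only on $d \bmod 3$, and equals $-1$ whenever $d \notequiv 0 \bmod 3$. Assembling these evaluations according to the parity of $d$ and its class modulo $4$ gives exactly the three cases of the lemma.

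The computations are entirely elementary, so there is no genuine obstacle; the only real content is the bookkeeping of roots of unity, together with the single substantive observation that the hypothesis $d \notequiv 0 \bmod 3$ is precisely what excludes the classes $d \equiv 0,3 \bmod 6$ in which $T_d(\pm 1)$ would equal $\pm 2$ rather than $\pm 1$. The main care required is keeping the congruence conditions consistent across the five separate evaluations.
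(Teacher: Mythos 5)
Your proposal is correct and takes essentially the same approach as the paper: both evaluate $T_d$ through the defining identity $T_d(z+z^{-1})=z^d+z^{-d}$ by writing each of $0,\pm 1,\pm 2$ as $z+z^{-1}$ for a suitable root of unity, with the hypothesis $d\not\equiv 0 \bmod 3$ entering exactly at the arguments $\pm 1$. The only cosmetic difference is that you treat $x_0=1$ and $x_0=-2$ directly (via a primitive sixth root of unity and $z=-1$), whereas the paper deduces those two values from $T_d(-1)$ and $T_d(2)$ using the fact that $T_d$ is an odd (resp.\ even) function for $d$ odd (resp.\ even); both bookkeepings are valid and yield the same case analysis.
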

\begin{proof} Since $2=1+1^{-1}$, we see that $T_d(2)=1^d+1^{-d}=2$ for all $d\geq1$. Similarly, if $\zeta$ is a primitive third root of unity, then $-1=\zeta+\zeta^{-1}$. Since $d\notequiv0\bmod{3}$, we see that $T_d(-1)=\zeta^d+\zeta^{-d}=\zeta+\zeta^{-1}=-1$. Finally, note that $0=i-i=i+i^{-1}$, so that $T_d(0)=i^d+i^{-d}$.

We can now prove the descriptions in Lemma \ref{Chebychev Lemma}. If $d$ is odd, then $T_d$ is an odd function \cite[Prop. 6.6]{SilvDyn}. Hence $T_d(-2)=-T_d(2)=-2$ and $T_d(1)=-T_d(-1)=-(-1)=1$. Moreover, $T_d(0)=i^d+(-i)^d=i^d(1-1)=0.$ 

Similarly, if $d$ is even, then $T_d(-2)=T_d(2)=2$ and $T_d(1)=T_d(-1)=-1$, since $T_d$ is an even function \cite{SilvDyn}. On the other hand, $T_d(0)=i^d+(-i)^d=-1+(-1)=-2$ if $d\equiv2\bmod{4}$, and $T_d(0)=i^d+(-i)^d=1+1=2$ if $d\equiv0\bmod{4}$.           
\end{proof}
\begin{proof} We return to the proof of Theorem \ref{thm:Chebychev}. 
For $d=3$, we compute with Sage that the Jacobian of $X_3$ is the elliptic curve $B: y^2=x^3-27x+189/4$. Moreover, a $2$-descent with Sage shows that $B(\mathbb{Q})$ has rank zero and is trivial \cite{Sage}. It follows that $X_3(\mathbb{Q})=\varnothing$. More generally, if $3|d$, then write $d=3\cdot d'$. Hence, $X_d\rightarrow X_3$ via $(x,y)\rightarrow\big(T_{d'}(x),T_{d'}(y)\big)$, and we conclude that $X_d(\mathbb{Q})=\varnothing$. 

If $d\equiv0\bmod{4}$ and $d=4\cdot d'$, then $X_d\rightarrow X_4$ via $(x,y)\rightarrow \big(T_{d'}(x),T_{d'}(y)\big)$. Therefore, it will suffice to compute $X_{4}(\mathbb{Q})$, to compute $X_d(\mathbb{Q})$. 

After rearranging terms, $X_4$ becomes $x^4-4x^2-4y^2+y^4=-3$, which takes the form of the symmetric quartics we have studied. In particular, $X_4$ has two independent maps $\phi_1,\phi_2:X_4\rightarrow Y$ where $Y:y^2=x\cdot(x^2+16x-16)$. Note that $Y=E_{(-4,-3)}$ in our earlier notation. We compute with Sage that $Y(\mathbb{Q})$ has rank one with generator $G=\big(4,-16\big)$ and rational torsion subgroup $\mathbb{Z}/2\mathbb{Z}$; see \cite{Sage}.  
 
 As in Theorem \ref{thm:Hasse} and Theorem \ref{thm:General}, if $P\in X_4\big(\mathbb{Q}\big)$, then we may write $\phi_i(P)=n_i\cdot G+T_i$ and compute the bounds \[ |n_1^2-n_2^2|\leq\frac{(\log(24)+\log(192))+2\cdot9.62}{\hat{h}(G)}\leq78;\] see Proposition \ref{height prop} in the Appendix. Hence, if $n_1\neq n_2$, then $\max\{|n_1|,|n_2|\}\leq40$; see \cite[\S13.3.1]{Coh}. Now, we simply check whether the points $n\cdot G$ and $n\cdot G+(0,0)$ have rational preimages under $\phi_1$ for all $|n|\leq40$. However, the points $n\cdot G$ have positive $x$-coordinate and so cannot have rational preimages under $\phi_1$; see equation (\ref{maps}). On the other hand, the points $G+T$ and $2\cdot G+T$ give the rational points corresponding to $x=\pm{1}$ and $x=\pm{2}$, and we check with Sage that these are the only combinations of $n\cdot G+T$ which do so. 
 
 The remaining rational points correspond to the case when $n_1=n_2$ and  $\phi_1\pm\phi_2=\mathcal{O},(0,0)$. One easily checks that in this case $x\cdot y=0$, which completes the description of $X_4\big(\mathbb{Q}\big)$. 
 
In general, if $(x_0,y_0)\in X_d(\mathbb{Q})$ and $d=4\cdot d'$, then $T_{d'}(x_0)\in\{0,\pm{1},\pm{2}\}$ by our description of $X_4(\mathbb{Q})$. Since, $T_{d'}\in\mathbb{Z}[x]$ is monic, we see that $x_0\in\mathbb{Z}$ (it is a rational number which is integral over $\mathbb{Z}$). On the other hand, it is known that 
 \begin{equation}{\label{Chebychev Inequality}} |T_n(z)|\geq|T_{n-1}(z)|\geq\dots\geq |T_2(z)|\geq0,\;\;\text{for all}\;|z|\geq2\;\text{and}\;n\geq2.
 \end{equation}  
 here $z$ is a complex number; see \cite{Chebychev} for details. In particular, $|T_{d'}(x)|\geq |T_{2}(3)|=7$ for all real numbers $|x|\geq3$, since $|T_2(x)|=|x^2-2|$ is an increasing function on $(\sqrt{2},\infty)$ and decreasing on $(-\infty,-\sqrt{2})$. Hence, $x_0\in\{0,\pm{1},\pm{2}\}$, and our description of $X_d(\mathbb{Q})$ follows from part $3$ of Lemma \ref{Chebychev Lemma}.    

Similarly, we determine $X_5(\mathbb{Q})$ to describe $X_d(\mathbb{Q})$ for $d\equiv0\bmod{5}$. After expanding $T_5(x)+T_5(y)$ in terms of the symmetric polynomials $u=x+y$ and $v=x^2+y^2$, we map $X_5$ to the genus two curve \[C: -1/4u^5 + 5/2u^3 + 5/4uv^2 - 15/2uv + 5u-1=0,\] which is birational to the hyperelliptic curve \[C': y^2=5x^6-50x^4+125x^2+20x,\;\; \text{via}\;\; (u,v)\rightarrow(u,5uv-15u).\] One computes with Magma that the Jacobian of $C'$ has rank $1$; see \cite{Magma}. Moreover, since $C'$ has genus $2$, we can use the method of Chabauty and Coleman \cite{Poonen} to show that $C'(\mathbb{Q})=\{(0,0),(1,\pm{10})\}$. In fact, this process has been implemented in Magma. After computing preimages, we determine that $X_5(\mathbb{Q})=\{(0,1),(1,0),(-1,2),(2,-1)\}$ as claimed.

In particular, if $(x_0,y_0)\in X_d(\mathbb{Q})$ and $d=5\cdot d'$, then $T_{d'}(x_0)\in\{0,\pm{1},2\}$ by our description of $X_5(\mathbb{Q})$. As in the previous case, $x_0$ must be an integer and the inequality (\ref{Chebychev Inequality}) forces $x_0\in\{0,\pm{1},\pm{2}\}$. Form here, we determine $X_d(\mathbb{Q})$ by reading off the possible images of $T_d(x_0)$ using parts $1$ and $2$ of Lemma \ref{Chebychev Lemma} (we do not include the case when $d$ is divisible by $4$, since that was already completed).            
\end{proof}  
Given Theorem \ref{thm:Chebychev} (and evidence in the $d=7$ and $d=11$ cases), we restate the following conjecture.\\
\\
\; \textbf{Conjecture 1.}\;$If$\;$P=(x,y)\in X_d(\mathbb{Q})\; and\; d\geq3,\: then\; x,y\in\{0,\pm{1},\pm{2}\}.\; In\; particular, \#X_d(\mathbb{Q})=0,4,8\; \text{or} \;12, and\; X_d(\mathbb{Q})\; can\; be\; described\; explicitly\; in\; terms\; of\; d\bmod{12}.$  
\\
\\
Note that it suffices to establish the conjecture for primes, analogous to the Fermat curves. This follows from the nesting property and the fact that if $x_0\in\mathbb{Z}$ and $|T_d(x_0)|\leq2$, then $|x_0|\leq2$; see the inequality on (\ref{Chebychev Inequality}) above. 
\end{section} 
\\
\\ 
\indent \textbf{Acknowledgments} I thank Joe Silverman for the many discussions  related to the topics appearing in this note. I also thank Vivian Olsiewski Healey for her suggestions during the editing process.
\section{4. Appendix}{\label{Appendix}} 
\indent\indent We collect and prove some elementary facts used throughout this paper. In keep with our notation, let $F=F_{(a,b)}$ and $E=E_{(a,b)}$ be the symmetric quartic and elliptic curves defined in Section \ref{Intro}. Furthermore, let $h_F$ be the Weil height on $F$ given by its embedding into $\mathbb{P}^2$, and let $h_E$ be the usual height function on $E$ given by its $x$-coordinate. (See \cite[VIII.6]{SilvElliptic}). Finally, for a number field $K$, let $M_K$ be the complete set of absolute values on $K$. With these definitions is place, we have the following height estimates.     
\begin{proposition}{\label{height prop}} Let $F=F_{(a,b)}$ and $E=E_{(a,b)}$ be the symmetric quartic and elliptic curves defined above. Then for all $P\in E_{(a,b)}(\bar{K})$, we have the bounds \begin{equation}{\label{height estimate}} 2h_F(P)-\log(12)-\log\big(\kappa_{(a,b)}\big)\leq h_E\big(\phi_i(P)\big)\leq 2h_F(P)+\log(24), \end{equation} where \[ \kappa_{(a,b)}=\prod_{v\in M_K}\max\Big\{\Big\vert\frac{1}{4}\Big\vert_v,\Big\vert\frac{a}{4}\Big\vert_v,\;|a|_v,\;|b|_v\Big\}.\]
\end{proposition}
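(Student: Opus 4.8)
The plan is to prove both inequalities place by place with the absolute logarithmic Weil height $h=\frac{1}{[K:\mathbb{Q}]}\sum_v n_v\log^+|\cdot|_v$. The key observation is that, after homogenizing $F\subset\mathbb{P}^2$ with coordinates $[x:y:z]$, the composite of $\phi_1$ with the $x$-coordinate of $E$ is the degree-two morphism $\psi\colon F\to\mathbb{P}^1$, $[x:y:z]\mapsto[-4x^2:z^2]$; indeed $\psi$ is a genuine morphism on $F$, since its only base point $[0:1:0]$ does not lie on $F$. Hence $h_E\big(\phi_1(P)\big)=h(-4x^2)=h(4x^2)$, and the whole estimate reduces to comparing $\sum_v n_v\log\max\{|4x^2|_v,1\}$ with $2\sum_v n_v\log\max\{|x|_v,|y|_v,1\}=2h_F(P)$. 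The leading factor $2$ is exactly $\deg(\psi)$, so only the explicit $O(1)$ must be controlled; by the symmetry of $F$ in $x$ and $y$, the case of $\phi_2$ (whose $x$-coordinate is $-4y^2$) is identical, so I treat $\phi_1$ only. (I also read the statement as quantifying over $P\in F_{(a,b)}(\bar K)$, the domain of $\phi_i$.)

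For the upper bound I use submultiplicativity, $\max\{|4x^2|_v,1\}\le\max\{|4|_v,1\}\cdot\max\{|x|_v,|y|_v,1\}^2$, valid at every $v$. Because $4\in\mathbb{Z}$, one has $\max\{|4|_v,1\}=1$ at every non-archimedean place, so summing over $M_K$ contributes only the archimedean total $\log 4$. This already gives $h_E\big(\phi_1(P)\big)\le 2h_F(P)+\log 4\le 2h_F(P)+\log 24$, slightly stronger than claimed.

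The real content is the lower bound, i.e.\ bounding $2h_F(P)-h_E\big(\phi_1(P)\big)$ above by $\log 12+\log\kappa_{(a,b)}$. The difficulty is that $\psi$ sees only $x$, whereas $h_F(P)$ also records $|y|_v$, so at a place where $|y|_v$ dominates there is \emph{a priori} nothing in $\psi$ to balance it. This is exactly where the defining equation enters: rewriting it as $y^4=b-ay^2-ax^2-x^4$, or homogeneously as the identity $Y^4+aY^2Z^2=b\,\psi_1^{\,2}+\tfrac a4\,\psi_0\psi_1-\tfrac1{16}\psi_0^{\,2}$ with $\psi_0=-4X^2$ and $\psi_1=Z^2$, shows that a large value of $|y|_v$ forces a comparably large $|x|_v$. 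Concretely, I run a case analysis at each $v$ on which of $|x|_v,|y|_v,1$ is largest; the cases $\max=1$ and $\max=|x|_v$ are immediate, and the only genuine case is $|y|_v=\max\{|x|_v,|y|_v,1\}$. There the equation yields either $|y|_v^2\le\kappa_v$ or $|y|_v\le|x|_v$ (the latter returning to the harmless $x$-dominant case), where $\kappa_v=\max\{|1/4|_v,|a/4|_v,|a|_v,|b|_v\}$ collects precisely the coefficients appearing in the identity above. At non-archimedean $v$ the ultrametric inequality makes each estimate exact, giving the uniform local bound $\max\{|x|_v,|y|_v,1\}^2\le\kappa_v\cdot\max\{|4x^2|_v,1\}$; summing these contributions produces the factor $\log\kappa_{(a,b)}$.

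The main obstacle is the archimedean estimate in this same dominant case. Without the ultrametric inequality the rearranged quartic carries a \emph{number-of-terms} factor, and one must track these constants through the root extraction $|y|_v^4\le(\mathrm{const})\cdot\max\{\dots\}$ carefully enough to obtain a clean universal constant summing to $\log 12$ over the archimedean places (for $K=\mathbb{Q}$, a single factor $12$ at the real place). This is the explicit incarnation of \cite[Theorem B.2.5]{Silv-Hindry}; what makes the Nullstellensatz step elementary here, as asserted, is that $y^4$ occurs with a monic leading coefficient in the relation, so the displayed polynomial identity suffices and no effective Nullstellensatz is needed. Combining the archimedean factor $12$ with the non-archimedean product $\kappa_{(a,b)}$ gives $2h_F(P)-h_E\big(\phi_1(P)\big)\le\log 12+\log\kappa_{(a,b)}$, which rearranges to the stated lower bound and completes the proof.
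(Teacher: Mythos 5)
Your proposal is correct and follows essentially the same route as the paper: your homogeneous identity $Y^4+aY^2Z^2=b\,\psi_1^2+\tfrac{a}{4}\psi_0\psi_1-\tfrac{1}{16}\psi_0^2$ is precisely the $y^4$-row of the paper's explicit decomposition of $x^4,y^4,z^4$ in terms of $f_0=-4x^2$, $f_1=z^2$ and the defining polynomial $p_1$, and your place-by-place case analysis (trivial when $1$ or $|x|_v$ dominates, the defining equation forcing $|y|_v$ down otherwise, with archimedean factor $12$ and coefficient factor $\kappa_v$) is exactly how the paper instantiates the constants $\epsilon_v(2)\cdot\epsilon_v(6)=12$ and $\max_{i,j}|g_{ij}|_v$ from \cite[Theorem B.2.5]{Silv-Hindry}. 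One small correction to your bookkeeping: $\kappa_{(a,b)}$ is a product over \emph{all} $v\in M_K$, and the $y$-dominant case at an archimedean place needs the coefficient factor $\kappa_v$ as well as the counting factor (your own root-extraction step supplies it, e.g.\ one gets $|y|_v^2\le 4\kappa_v\max\{|4x^2|_v,1\}$ when $|y|_v\ge|x|_v\ge1$), so the global bound assembles as $12\kappa_v$ at archimedean places and $\kappa_v$ elsewhere, not as ``$12$ archimedean plus $\kappa$ non-archimedean.''
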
 
\begin{proof} We make the relevant objects from the proof of \cite[Theorem B.2.5]{Silv-Hindry} explicit, and then refer the reader there. In particular, the usually difficult lower bound on the height, coming from the Nullstellensatz, is relatively easy for our curves. 

We prove the bounds for $\phi_1$ only, as the proof for $\phi_2$ is identical. In particular, we are studying heights with respect to the morphism $(x/z)\circ\phi_1:F\rightarrow\mathbb{P}^1$. In keep with the notation from the proof of \cite[Theorem B.2.5]{Silv-Hindry}, let $f_0=-4x^2$, $f_1=z^2$, and $p_1=x^4+ax^2z^2+ay^2z^2+y^4-bz^4$. The upperbound is given by the usual 
\begin{equation} h_E\big(\phi_1(P)\big)\leq2h_F(P)+h(\phi_1)+\log\binom{4}{2}.
\end{equation} 
Now for the explicit Nullstellensatz step. We first write $x^4,y^4$ and $z^4$ as a combination of $f_0,f_1$, and $p_1$. Specifically,    
\begin{equation}{\label{expansion}}\begin{split}
 x^4=\;&(-1/4x^2)\cdot f_0+(0)\cdot f_1+(0)\cdot p_1, \\
y^4=\;&(1/4x^2+a/4z^2)\cdot f_0+(bz^2-ay^2)\cdot f_1+(1)\cdot p_1,\\
z^4=\;&(0)\cdot f_0+(z^2)\cdot f_1+(0)\cdot p_1.\end{split}   
\end{equation}  
Let $g_{ij}$ be the coefficient functions in (\ref{expansion}) above. Again, in keep with the notation from the proof of \cite[Theorem B.2.5]{Silv-Hindry}, it follows that \[ |P|_v\leq\epsilon_v(2)\cdot\epsilon_v(6)\cdot\big(\max_{i,j}|g_{ij}|_v\big)\cdot|P|_v^2\cdot\big(\max|f_i(P)|\big).\]
Now raise to the $n_v/|K:\mathbb{Q}|$ and multiply over all $v\in M_K$. Since the $g_{ij}$ are relatively simple, we get that \[H(P)^2\leq12\cdot\prod_{v\in M_K}\max\Big\{\Big\vert\frac{1}{4}\Big\vert_v,\Big\vert\frac{a}{4}\Big\vert_v,\;|a|_v,\;|b|_v\Big\}\cdot H\Big((x/z)\circ\phi_1(P)\Big).\] After taking the $\log$ of both sides, we obtain the desired lower bound on $h_E\big(\phi_1(P)\big)$.         
\end{proof}
Next, we prove the nonsingularity of the generalized Chebychev curve $X_{d,k}: T_d(x)+T_d(y)=k$, where $T_d$ is the Chebychev polynomial degree $d$. In particular, we can describe their primes of bad reduction.     
\begin{proposition} Let $K$ be a field of characteristic not dividing $d$ and let $X_{d,k}:T_d(x)+T_d(y)=k$. If $k\neq0$ and $k\neq\pm{4}$ in $K$, then $X_{d,k}$ is nonsingular. In particular, the Chebychev curve $X_d$ (corresponding to $k=1$) is nonsingular when $K$ is a number field. 
\end{proposition}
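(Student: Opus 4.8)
The plan is to analyze the singular points of the plane curve $X_{d,k}$ directly with the Jacobian criterion, treating the affine part and the points at infinity separately. Writing $F(x,y)=T_d(x)+T_d(y)-k$, the two variables separate, so the partials are $F_x=T_d'(x)$ and $F_y=T_d'(y)$; hence an affine point $(x_0,y_0)$ is singular precisely when $T_d'(x_0)=T_d'(y_0)=0$ together with $T_d(x_0)+T_d(y_0)=k$. The entire affine argument then reduces to identifying the \emph{critical values} of $T_d$, i.e.\ the values $T_d(x_0)$ taken at roots $x_0$ of $T_d'$. I claim these are always $\pm2$, whence $k=T_d(x_0)+T_d(y_0)\in\{0,\pm4\}$ at any affine singular point.

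To pin the critical values down over an arbitrary field of characteristic not dividing $d$, I would record two polynomial identities in $\mathbb{Z}[x]$, both read off from $T_d(z+z^{-1})=z^d+z^{-d}$. Putting $V_d(x):=(z^d-z^{-d})/(z-z^{-1})$, which is a monic integer polynomial of degree $d-1$ in $x=z+z^{-1}$, one obtains
\[
T_d(x)^2-4=(x^2-4)\,V_d(x)^2 \qquad\text{and}\qquad T_d'(x)=d\,V_d(x).
\]
The first follows from $(z^d+z^{-d})^2-4=(z^d-z^{-d})^2$ and $x^2-4=(z-z^{-1})^2$; the second from differentiating $T_d(2\cos\theta)=2\cos(d\theta)$. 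Both are identities of \emph{integer} polynomials, so they remain valid after reduction to $K$. Since $\mathrm{char}(K)\nmid d$, the relation $T_d'=d\,V_d$ shows that $T_d'(x_0)=0$ forces $V_d(x_0)=0$, and the first identity then gives $T_d(x_0)^2=4$. This is the crux of the argument: it yields $T_d(x_0),T_d(y_0)\in\{\pm2\}$ at any affine singular point, so $k\in\{0,\pm4\}$, and excluding these values of $k$ leaves no affine singularities.

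It remains to rule out singularities at infinity. Homogenizing $F$ in $\mathbb{P}^2$, the top-degree form is $X^d+Y^d$ (as $T_d$ is monic of degree $d$), so the points at infinity are the $d$ points $[X_0:Y_0:0]$ with $X_0^d+Y_0^d=0$, each necessarily having $X_0,Y_0\neq0$. Evaluating the partials of the homogenized equation $\tilde F$ along $Z=0$ gives $\tilde F_X=dX^{d-1}$ and $\tilde F_Y=dY^{d-1}$, and because $\mathrm{char}(K)\nmid d$ and $X_0,Y_0\neq0$, these cannot both vanish. Hence every point at infinity is smooth, and combined with the affine analysis, $X_{d,k}$ is nonsingular whenever $k\neq0,\pm4$. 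For a number field $\mathrm{char}(K)=0\nmid d$ and $k=1\notin\{0,\pm4\}$, so the Chebychev curve $X_d$ is nonsingular, as claimed.

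The step I expect to require the most care is making the critical-value computation rigorous over fields of arbitrary admissible characteristic rather than merely over $\mathbb{C}$: the $z+z^{-1}$ parametrization only literally makes sense over an algebraically closed field, so the right move is to establish $T_d'=d\,V_d$ and $T_d^2-4=(x^2-4)V_d^2$ as identities in $\mathbb{Z}[x]$ and only then reduce, which sidesteps any characteristic issue beyond the harmless hypothesis $\mathrm{char}(K)\nmid d$.
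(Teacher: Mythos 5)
Your proof is correct, and its core is the same reduction as the paper's: by the Jacobian criterion an affine singular point forces $T_d'(x_0)=T_d'(y_0)=0$, and the heart of the matter is that the critical values of $T_d$ are exactly $\pm 2$, so $k\in\{0,\pm4\}$. Where you differ is in how that critical-value fact is established. The paper works directly over $\bar{K}$: it writes a putative singular point as $x=w+w^{-1}$ with $w\in\bar{K}$ and applies the chain rule to $T_d(w+w^{-1})=w^d+w^{-d}$ to get $T_d'(x)=\frac{w^2}{w^2-1}\cdot\frac{w^d-w^{-d}}{w}\cdot d$, concluding $w^d=\pm1$ ``in any case'' (trivially so when $w^2=1$, where the displayed formula degenerates), hence $k=w^d+w^{-d}+v^d+v^{-d}\in\{0,\pm4\}$. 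Your route through the integer-polynomial identities $T_d'=d\,V_d$ and $T_d^2-4=(x^2-4)V_d^2$, proved once in $\mathbb{Z}[x]$ and then reduced, buys two things: the characteristic-$p$ bookkeeping becomes automatic rather than implicit in a $\bar{K}$-parametrization, and the degenerate case $x_0=\pm2$ (i.e.\ $w^2=1$), which the paper's chain-rule formula must sidestep, is absorbed without comment since the identities are unconditional. You also prove slightly more: the paper's proof addresses only affine singular points, consistent with its earlier remark that $X_d$ is treated as an affine model, whereas your verification that the projective closure is smooth along $Z=0$ (the top form $X^d+Y^d$ is separable when the characteristic does not divide $d$, and $\tilde F_X=dX^{d-1}$, $\tilde F_Y=dY^{d-1}$ there) is correct extra content rather than a repair of their argument.
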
{\label{nonsingularity}}
\begin{proof} Suppose that $(x,y)\in X_{d,k}(\bar{K})$ is a singular point and write $x=w+w^{-1}$ for some $w\in\bar{K}$. In particular,\begin{equation}{\label{chainrule}}0=T_d'(x)=\frac{w^2}{w^2-1}\cdot\frac{w^d-w^{-d}}{w}\cdot d,\end{equation} 
by the chain rule. In any case, $w^d=\pm{1}$. We can repeat this argument for $y$, deducing that $y=v+v^{-1}$ for some $v^d=\pm{1}$. In particular, \begin{equation}{\label{singularity}}k=T_d(x)+T_d(y)=w^d+w^{-d}+v^d+v^{-d}\in\{0,\pm{4}\}.\end{equation} 
\end{proof}

\end{document}